\numberwithin{equation}{section}
\newtheorem{theorem}{Theorem}[section]
\newtheorem{proposition}[theorem]{Proposition}
\newtheorem{lemma}[theorem]{Lemma}
\newtheorem{corollary}[theorem]{Corollary}
\theoremstyle{remark}
\newtheorem{example}[theorem]{Example}
\newtheorem{remark}[theorem]{Remark}
\newtheorem{defn}[theorem]{Definition}
\newcounter{FNC}[page]
\def\fauxfootnote#1{{\addtocounter{FNC}{2}$^\fnsymbol{FNC}$%
     \let\thefootnote\relax\footnotetext{$^\fnsymbol{FNC}$#1}}}
\newcommand{\C}{{\mathbb{C}}}
\newcommand{\N}{{\mathbb{N}}}
\newcommand{\Z}{{\mathbb{Z}}}
\newcommand{\R}{{\mathbb{R}}}
\title{Graphs of Reduced Words  and Some Connections}
\author{Praise Adeyemo}
\address{Department of Mathematics\\
  University of Ibadan\\
   Ibadan, Oyo, Nigeria}
\email{ph.adeyemo@ui.edu.ng, praise.adeyemo13@gmail.com}
\urladdr{http://sci.ui.edu.ng/HPAdeyemo}
\subjclass{ 14N15, 05E05}
\keywords{Reduced word, row-strict tableau, Grassmannian permutation, standard 2-simplex and poset }
\begin{document}
\begin{abstract}
  The family of graphs of reduced words of a certain subcollection  of permutations in the union $\cup_{n\geq 4}\frak{S}_{n}$ of symmetic groups is investigated. The subcollection is characterised by the hook cycle type $(n-2,1,1)$ with consecutive fixed points. A closed formula for counting the vertices of each member of the family  is given and the vertex-degree polynomials for the graphs with their generating series is realised. Lastly, some isomorphisms of  these graphs with various combinatorial objects are established.    

\end{abstract}

\maketitle
%
\section{Introduction}
\noindent The connection between reduced words of permutations and tableaux dated back to the 1982 paper  of A. Lascoux and Mercel-Paul Schutzenberger [9]. Ever since the connection was found there have been a significant amount of research along this direction. The reader is referred to [3], [5], [7], [11], [12] and [14]. We study the reduced word graphs of the permutations $_{n}w$  of the form
 $$ [n,1,2,3,\dots, n-4, n-2, n-1, n-3], \ {\rm for} \  \ n\geq 4.$$
These permutations are characterised by the hook cycle type $(n-2, 1,1)$ with consecutive fixed points at the positions $n-1$ and $n-2$. The characterisation allows the establishment of a bijection between the set of reduced words of  $_{n}w$ and a certain sub-collection of the set of row-strict tableaux $\tau$ shaped $(n-2,1,1)$ such that the entries of each tableau encode the positions of ascents and descents of every reduced words of $_{n}w$ with respect to a specific  row reading. This gives rise to a closed formula for counting the reduced words of $_{n}w$ in terms of  multinomial coefficients.   It turns out that there is an isomorphism between the graph of reduced words of the permutation  and the Hasse diagram $\mathcal{R}^{_{\tau}w}_{n}$ of the sub-collection of row-strict tableaux $\tau$. It is observed that the set of row-readings of the sub-collection of  row-strict tableaux $\tau$ is precisely the set of Grassmannian permutations with a unique descent at $(n-2)$. In fact, these are exactly all the Grassmannian permutations whose associated partitions  have Young diagrams which fit into the rectangle $\square_{n-2\times 2}$. It well known that  the usual q-binomial coefficient q-counts the set of  such partitions:
$$\left[ n \choose n-2\right]_q =\sum_{\lambda} q^{|\lambda|}$$
where $|\lambda|$ denotes the number partitioned by $\lambda$. This has an interesting implication in geometry in that these partitions index the Schubert varieties of the Grassmannian variety ${\rm Gr}(2,n)$. The observation reveals a fundamental  connection between  the graph $\mathcal{G}_{_{n}w}$ of the reduced words of the permutation $_{n}w$ and the the graph  $\mathcal{G}_{(n-2)\Delta_{2}\cap \Z^{2}}$ of $(n-2)$-fold dilation of the standard 2-simplex in that to every lattice point {\bf a} in $(n-2)\Delta_{2}\cap \Z^{2}$ there is a fitted partition $\lambda$ and a weight ${\bf m}_{\bf a}$ such that the length of the Grassmannian permutation $w(\lambda)$ is ${\bf m}_{\bf a}$. Some necessary backgrounds on the symmetric group $\frak{S}_{n}$ are reviewed and two preliminary results are given in section 2. The bijecton between the vertex set of the graph $\mathcal{G}_{_{n}w}$ of reduced words of the permutation $_{n}w$ and the sub-collection of row-strict tableaux $\tau$ of shape $(n-2,1,1)$ which brings out the closed formula for the order of the graph is established in section 3. The symmetry in the structure of the graph $\mathcal{G}_{_{n}w}$ is exploited in the same section to give the vertex-degree polynomials ${\rm P}_{\mathcal{G}_{_{n}w}}(d)$ for $\mathcal{G}_{_{n}w}, \  n\geq 4$ and their generating series. In section 4 we give a poset of the sub-collection of row-strict tableaux $\tau$ and establish an isomorphism between the graph $\mathcal{G}_{_{n}w}$ and the Hasse diagram of the row-strict tableaux. We show that the Hasse diagram is graded by identifying it with the  sub-poset of Grassmannian permutations  induced by  the strong Bruhat graph of the symmetric group $\mathcal{S}_{n}$. In section 5  we give a construction of poset of the lattice points ${(n-2)\Delta_{2}\cap \Z^{2}}$  of $(n-2)$-fold dilation of the standard $2$-simplex using lexicographic ordering. The Hasse diagram of the poset is graded such that  the rank polynomial  is a refinement of the Ehrhart polynomial of the simplex. The graph of  lattice points is realised as the Hasse diagram of the poset.  
\section{Reduced Decompositions}
\noindent It is well known that the symmetric group $\mathfrak{S}_{n}$ on the  set $[n]:=\{1,\ldots, n\}$ is finitely presented. That is,
\begin{equation}
\mathfrak{S}_{n}= \langle s_1,\dots, s_{n-1} :  s_{i}^{2} =e,\ s_{i}s_{j} = s_{j}s_{i} , \ s_{i}s_{i+1}s_{i} = s_{i+1}s_{i}s_{i+1} \rangle
\end{equation}
The generators $s_1, s_2,\dots, s_{n-1}$ are simple transpositions each of which is an involution. The relations $s_{i}s_{j} = s_{j}s_{i}$  for  $\mid i-j\mid > 1$ and $s_{i}s_{i+1}s_{i} = s_{i+1}s_{i}s_{i+1}$ \  for  $1\leq i\leq n-2$ are called commutation and braid respectively. We shall adopt one-line notation for the elements in $\mathfrak{S}_{n}$.
\noindent The length $\ell(w)$ of the permutation $w\in\mathfrak{S}_{n}$ is given by the number of inversions in $w$,
\begin{equation}
\ell(w) = |\{(i,j)\in [n]^{2} : w(i)> w(j), 1\leq i<j\leq n\}|
\end{equation}
and the length generating function of the symmetric group $\mathfrak{S}_{n}$ is given by
\begin{equation}
G_{\mathfrak{S}_{n}} (q)= \prod^{n}_{k=1} \frac{q^{k}-1}{q-1}. 
\end{equation}
 \noindent  The symmetric group $\mathfrak{S}_{n}$  is a poset under the Bruhat order, that is, for  $v, v'\in \mathfrak{S}_{n}$, $v < v'$ if $v'$ is realised from $v$ by interchanging $v_{i}$ and  $v_{j}$ where $i<j$ and $v_{j} = v_{i}+1$. This defines strong Bruhat order on $\mathfrak{S}_{n}$, the weak Bruhat order is given by: for  $v, v'\in \mathfrak{S}_{n}$, $v < v'$ if $v'$ is realised from $v$ by interchanging $v_{i}$ and  $v_{i+1}$ where  $v_{i} < v_{i+1}$.  Bruhat order encodes some geometric interpretations as it describes the inclusion ordering of Schubert varieties of certain homogeneous spaces such as flag manifolds and Grassmannians.   The  poset of $\mathfrak{S}_{n}$  with respect to the Bruhat order is graded and the length generating function described in the equation $(2.3)$ is its rank function [5]. For instance, the Hasse diagram of $\mathfrak{S}_{4}$ with respect to the Bruhat order is given in Fig. 1 and its rank function is 
$$G_{\mathfrak{S}_{4}} (q)= q^6+ 3q^5 + 5q^4 + 6q^3+ 5q^2 + 3q + 1 . $$
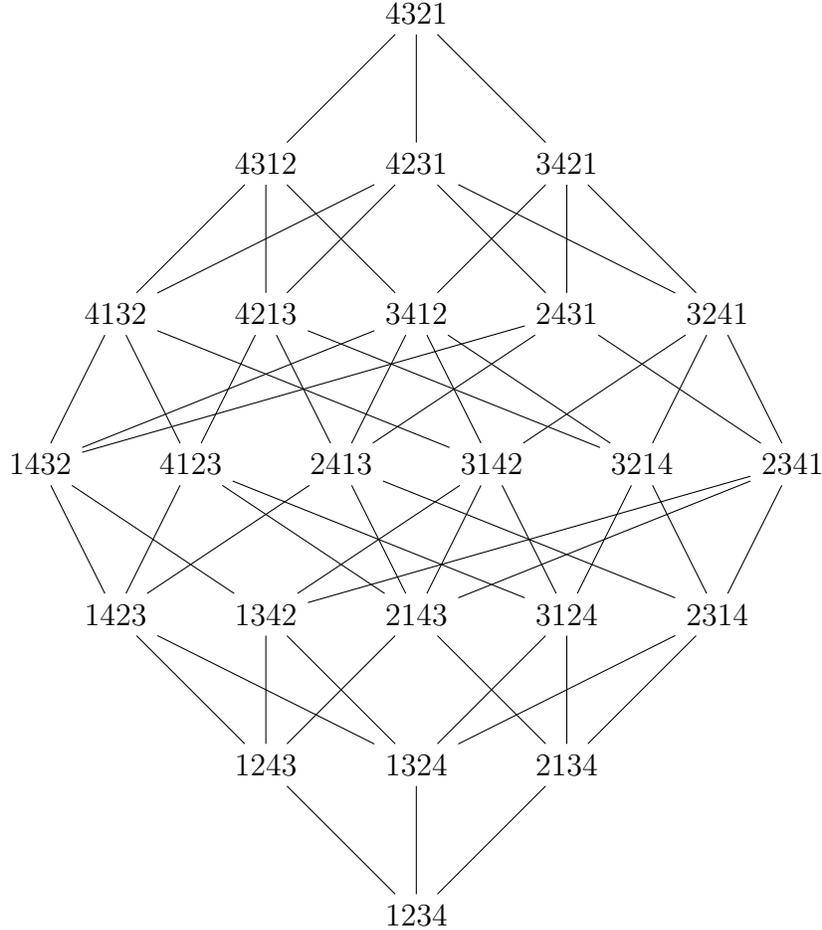
\begin{figure}[!hbt]
	\begin{center}
		\begin{tikzpicture}
		\tikzset{vertex/.style = {shape=circle,draw,minimum size=1.5em}}
		\tikzset{edge/.style = {->,> = latex'}}
		\node (a) at (0,0.5) {$1234$};
		\node (b) at (-2,2.5) {$1243$};
		\node (c) at (0,2.5) {$1324$};
		\node (d) at (2,2.5) {$2134$};
		\node (e) at (-4,4.5) {$1423$};
		\node (f) at (-2,4.5) {$1342$};
		\node (g) at (0,4.5) {$2143$};
		\node (h) at (2,4.5) {$3124$};
		\node (i) at (4,4.5) {$2314$};
		\node (j) at (-5,6.5) {$1432$};
		\node (k) at (-3,6.5) {$4123$};
		\node (l) at (-1,6.5) {$2413$};
		\node (m) at (1,6.5) {$3142$};
		\node (n) at (3,6.5) {$3214$};
		\node (o) at (5,6.5) {$2341$};
		\node (p) at (-4,8.5) {$4132$};
		\node (q) at (-2,8.5) {$4213$};
		\node (r) at (0,8.5) {$3412$};
		\node (s) at (2,8.5) {$2431$};
		\node (t) at (4,8.5) {$3241$};
		\node (u) at (-2,10.5) {$4312$};
		\node (v) at (0,10.5) {$4231$};
		\node (w) at (2,10.5) {$3421$};
		\node (x) at (0,12.5) {$4321$};
		\draw (b) to (a);
		\draw (c) to (a);
		\draw (d) to (a);
		\draw (e) to (b);
		\draw (f) to (b);
		\draw (g) to (b);
		\draw (e) to (c);
		\draw (f) to (c);
		\draw (h) to (c) ;
		\draw (i) to (c) ;
		\draw (g) to (d) ;
		\draw (h) to (d) ;
		\draw (i) to (d) ;
		\draw (j) to (e) ;
		\draw (k) to (e) ;
		\draw (l) to (e) ;
		\draw (j) to (f) ;
		\draw (m) to (f) ;
		\draw (o) to (f) ;
		\draw (k) to (g) ;
		\draw (l) to (g) ;
		\draw (m) to (g) ;
		\draw (o) to (g) ;
		\draw (k) to (h) ;
		\draw (m) to (h) ;
		\draw (n) to (h) ;
		\draw (l) to (i) ;
		\draw (n) to (i) ;
		\draw (o) to (i) ;
		\draw (p) to (j) ;
		\draw (r) to (j) ;
		\draw (s) to (j) ;
		\draw (p) to (k) ;
		\draw (q) to (k) ;
		\draw (q) to (l) ;
		\draw (r) to (l) ;
		\draw (s) to (l) ;
		\draw (p) to (m) ;
		\draw (r) to (m) ;
		\draw (t) to (m) ;
		\draw (q) to (n) ;
		\draw (r) to (n) ;
		\draw (t) to (n) ;
		\draw (s) to (o) ;
		\draw (t) to (o) ;
		\draw (u) to (p) ;
		\draw (v) to (p) ;
		\draw (u) to (q) ;
		\draw (v) to (q) ;
		\draw (u) to (r) ;
		\draw (w) to (r) ;
		\draw (w) to (s) ;
		\draw (v) to (s) ;
		\draw (v) to (t) ;
		\draw (w) to (t) ;
		\draw (x) to (u) ;
		\draw (x) to (v) ;
		\draw (x) to (w) ;
		\end{tikzpicture}
	\end{center}
	\caption{Hasse diagram of strong Bruhat order on  $\frak{S}_{4}$ .}
\end{figure}
 Recall that the length of $w\in\frak{S}_{n}$ is invariant  under inversion $w\mapsto w^{-1}$ and conjugation $w\mapsto w_{0}ww_{0}$, where $w_{0}$ is the longest permutation in $\frak{S}_{n}$, that is, $w_{0}:= [n,n-1,n-2,\dots, 3,2,1]$. Therefore, $\ell(w)=\ell(w^{-1})=\ell(w_{0}ww_{0})=\ell(w_{0}w^{-1}w_{0})$. The descent set Des($w$) of $w$ is given by Des($w$)=$\{i : w_{i}> w_{i+1}\}$.  A minimal length for $w$ as a finite product of simple transpositions $s_{i}$ is said to be reduced, that is, $w = s_{a_1}s_{a_2}\cdots s_{p}$, where $p =\ell(w)$. We say that the sequence {\bf a}= $a_{1}a_{2}\cdots a_p$ is a reduced word for $w$. The descent set Des({\bf a}) of the reduced word {\bf a}=$a_{1}a_{2}\cdots a_p$ is given by Des({\bf a})=$\{j : a_{j}> a_{j+1}\}$, likewise the ascent set Asc({\bf a}) of {\bf a}  is given by   Asc({\bf a})=$\{j : a_{j} < a_{j+1}\}$.  The reduced decomposition of $w$ is not unique in that another reduced decomposition of $w$ can be realised by applying either braid or commutation relation.  This observation gives rise to the graph $\mathcal{G}_{w}$ of reduced words of the permutation $w$ in which the reduced words of $w$ constitute the vertices of the graph such that there is an edge between any pair of reduced words if there is either a braid or commutation relation between them. We denote the set of reduced words for $w\in \frak{S}_{n}$ by $R(w)$  and its cardinality by $r(w)$.  For instance, if $w = 35124$ then
\begin{center}
$R(w)=\{42312, 24312, 42132, 24132,21432 \} \ \ {\rm and} \ \ r(w)=5$
\end{center}
The graph $\mathcal{G}_{35124}$  of the reduced words of the permutation $w=35124$  is given below

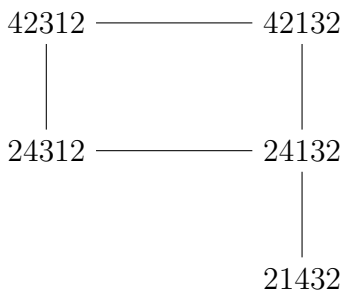
\begin{figure}[!hbt]
	\begin{center}
	  \begin{tikzpicture}[x= 1.7cm, y=1.7cm]
	   \tikzset{vertex/.style = {shape=circle,draw,minimum size=10.5em}}
	    \tikzset{edge/.style = {-,> = latex'}}
		\node (a) at (0,3) {$42312$};
		\node (b) at (0,2) {$24312$};
		\node (c) at (2,2) {$24132$};
		\node (d) at (2,3) {$42132$};
		\node (e) at (2,1) {$21432$};
		\draw (e)--(c)--(b)--(a)--(d)--(c);
		\end{tikzpicture}
	\end{center}
	\caption{The graph $\mathcal{G}_{35124}$ of the reduced words of $w=35124$}
\end{figure}    
\noindent If $w=[6,5,4,2,3,1]$ then $r(w) =64,064$. Given any permutation $w\in\mathfrak{S}_{n}$, what is $r(w)$?. The question is interesting due to fact that $r(w)$ encodes some combinatorial and geometric interpretations for certain class of permutations. In order to compute $r(w)$, Stanley [12] introduced a certain function $F_{w}$ called Stanley symmetric function. It turns out that the coefficient of the square free monomial in $F_{w}$ is number of the reduced words of $w$. This is given by 
\begin{equation}
r(w) = \sum_{\lambda\vdash \ell(w)} a_{\lambda(w)} f^{\lambda}
\end{equation}
Unfortunately, there is no general closed formula for the computation of $r(w)$ for any given $w\in\mathfrak{S}_{n}$.  However, for $w_{0}$,  the longest permutation in $\mathfrak{S}_{n}$, Stanley [12] gave a closed formula for $r(w_{0})$ as 
\begin{equation}
r(w_{0})= \frac{{n\choose 2}!}{1^{n-1}3^{n-2}5^{n-3}\cdots (2n-3)^{1}}
\end{equation}
This is precisely the number $f^{n-1,n-2,\dots, 3,2,1}$ of standard tableaux of shape $\lambda =(n-1,n-2,\dots, 3,2,1)$. The goal of this paper is to study the graphs of reduced words of the permutations $_{n}w$ in the family $\mathcal{W}\subset\cup_{n\geq 4}\mathfrak{S}_{4}$. These permutations are of the form:
\begin{equation}
 _{n}w = [n,1,2,3,\dots, n-4, n-2, n-1, n-3], \ {\rm for} \  \ n\geq 4
\end{equation}
For instance,  $_{4}w = 4231, \  _{5}w = 51342, \  _{6}w = 612453$, etc.
We care about the graphs of reduced words of these permutations   due to their interesting symmetry and  deep connection with the Schubert varieties of Grassmannian Gr$(2,n)$, of 2-dimensional subspaces in an $n$-dimensional vector space over the complex field. 
\begin{proposition}
Let $_{n}w\in \mathcal{W}$. Then $_{n}w$ has the following properties
\begin{enumerate}
\item[i.] The length $_{n}w$  is $n+1$.
\item[ii.]  The cycle type of $_{n}w$ is  $(n-2,1,1)$
\item[iii.] The  consecutive fixed points of $_{n}w$ occur at  the positions $n-2$ and $n-1$
\item[iv.] $_{n}w$ has only 2 ascents.
\end{enumerate}
\end{proposition}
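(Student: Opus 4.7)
The plan is to verify each of the four assertions by direct inspection of the one-line form ${}_nw = [n, 1, 2, \ldots, n-4, n-2, n-1, n-3]$; all four statements are bookkeeping claims about entries and their adjacencies, so no structural argument is required beyond unpacking the definition.

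For (i), I would apply the inversion formula $(2.2)$ block by block. The entry $w(1)=n$ is maximal, contributing $n-1$ inversions with every later position. The entries in positions $2, \ldots, n-3$ form the strictly ascending block $1, 2, \ldots, n-4$, contributing no inversions among themselves and none with the later values $n-2, n-1, n-3$, since each value in the block is strictly smaller than each of those. Finally, $w(n-2)=n-2$ and $w(n-1)=n-1$ are both larger than $w(n)=n-3$, contributing two more inversions. Summing gives $\ell({}_nw) = (n-1) + 2 = n+1$.

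For (ii) and (iii), I would trace the cycle of ${}_nw$ starting at $1$. Since $w(1)=n$, $w(n)=n-3$, and $w(k)=k-1$ for $2\leq k \leq n-3$, iteration produces
$$1 \longmapsto n \longmapsto n-3 \longmapsto n-4 \longmapsto \cdots \longmapsto 2 \longmapsto 1,$$
which is a single cycle on the $n-2$ element set $\{1,2,\ldots,n-3,n\}$. The two remaining entries satisfy $w(n-2)=n-2$ and $w(n-1)=n-1$, so they are fixed points. This gives cycle type $(n-2,1,1)$ and simultaneously places the fixed points at the consecutive positions $n-2$ and $n-1$.

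For (iv), I would read off the $n-1$ consecutive comparisons $w(i)$ versus $w(i+1)$ from the one-line form. Inside the block positions $2,\ldots,n-3$ the sequence $w(2)<w(3)<\cdots<w(n-3)$ is strictly increasing, and the transitions $w(n-3)=n-4 < n-2 = w(n-2)$ and $w(n-2)<w(n-1)$ are also increasing; only the pairs $w(1)=n>1=w(2)$ and $w(n-1)=n-1>n-3=w(n)$ fail to ascend. This pins down the full ascent/descent pattern of ${}_nw$. The only mild obstacle in the whole argument is checking the boundary case $n=4$, where the middle block $1,\ldots,n-4$ is empty and ${}_4w = 4231$ should be verified by hand to confirm that the indexing conventions above remain meaningful.
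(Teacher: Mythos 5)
Your proof is correct and follows essentially the same route as the paper: counting inversions block by block for (i), tracing the single $(n-2)$-cycle and reading off the two fixed points for (ii)--(iii), and inspecting adjacent entries for (iv). One remark: like the paper's own proof, your computation for (iv) actually establishes that ${}_{n}w$ has exactly two \emph{descents} (at positions $1$ and $n-1$) and hence $n-3$ ascents, so the statement's wording ``only 2 ascents'' appears to be a typo (true only for $n=5$) that neither your argument nor the paper's literally proves as stated.
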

\begin{proof}
 Following the definition of length and looking at the form of $_{n}w$ in $(1.6)$ there are exactly $n-1$ inversions associated with the value $n$ since every value to the right of $n$ is strictly less that $n$.  Also each of the value $n-2$ and $n-1$ has exactly one inversion associated to it. The values $1,2,3,\dots,n-4$ are in increasing order so there is no inversion. Therefore, $_{n}w^{n}$ has $n+1$ inversions.\\
 (ii) $_{n}w$ is an $(n-2)$-cycle of the form  $w=(1,  _{n}w(1), _{n}w^{2}(1),\cdots ,_{n}w^{n-3}(1))$, so it is of the type $(n-2,1,1)$.\\
 (iii) From (ii), $_{n}w$ has two fixed points and these fixed points at the positions $n-2$ and $n-1$.\\
 (iv.) From (1.6) the descents in $_{n}w$ occur at the first and last but one positions. 
\end{proof}
\begin{proposition}
Let $_{n}w\in \mathcal{W}$ such that $\sigma\in \frak{S}_{n}$ is its reverse, that is, $\sigma_{i}= _{n}w_{n+1-i}$ for $1\leq i\leq n$. Then $\ell(_{n}w)+\ell(\sigma) = {n\choose 2}$.
\end{proposition}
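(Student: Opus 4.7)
\emph{Proof proposal.} The plan is to reduce the claim to a general identity: for every $u\in\frak{S}_{n}$, right-multiplication by the longest element $w_{0}=[n,n{-}1,\ldots,2,1]$ complements the inversion set of $u$ inside ${[n]\choose 2}$, so that $\ell(u)+\ell(uw_{0})={n\choose 2}$. Once this is in hand, the proposition follows from a single observation about reversed one-line notation.

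First I would check that $\sigma={_{n}w}\cdot w_{0}$. Since $w_{0}(i)=n+1-i$, a direct computation gives $({_{n}w}\cdot w_{0})(i)={_{n}w}(n+1-i)={_{n}w}_{n+1-i}$, which agrees with $\sigma_{i}$ by hypothesis. Thus specialising the general identity above to $u={_{n}w}$ yields exactly $\ell({_{n}w})+\ell(\sigma)={n\choose 2}$.

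To prove the general identity I would use a bijection on unordered pairs. A pair $(i,j)$ with $i<j$ is an inversion of $uw_{0}$ iff $u(n+1-i)>u(n+1-j)$; the substitution $i'=n+1-j$, $j'=n+1-i$ rewrites this as $i'<j'$ together with $u(i')<u(j')$, i.e. as $(i',j')$ being a non-inversion of $u$. The assignment $(i,j)\mapsto(i',j')$ is a self-bijection of ${[n]\choose 2}$ that matches inversions of $uw_{0}$ with non-inversions of $u$, from which the identity $\ell(u)+\ell(uw_{0})={n\choose 2}$ is immediate. There is no substantive obstacle in this argument; the one point to guard against is conflating right-multiplication by $w_{0}$ (length-complementing, which is what this proof uses) with the conjugation $u\mapsto w_{0}uw_{0}$ (length-preserving) already flagged by the paper. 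As a sanity check, combining the identity with Proposition~2.1(i) predicts $\ell(\sigma)={n\choose 2}-(n+1)$, a non-negative integer for all $n\geq 4$.
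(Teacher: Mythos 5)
Your proof is correct, but it takes a genuinely different route from the paper. The paper argues by brute force on the explicit one-line form: it writes out $\sigma=[n-3,\,n-1,\,n-2,\,n-4,\dots,2,1,n]$, counts the inversions contributed by each of the values $n-3$, $n-1$, $n-2$ and by the decreasing block $n-4,\dots,2,1$ (namely $(n-4)+(n-3)+(n-4)+\binom{n-4}{2}$), concludes $\ell(\sigma)=\tfrac{n^{2}-3n-2}{2}$, and then adds the value $\ell({_{n}w})=n+1$ from Proposition~2.1(i). You instead observe that reversal of one-line notation is right-multiplication by $w_{0}$ and prove the general complementation identity $\ell(u)+\ell(uw_{0})=\binom{n}{2}$ via the pair bijection $(i,j)\mapsto(n+1-j,\,n+1-i)$ matching inversions of $uw_{0}$ with non-inversions of $u$; your verification that $\sigma={_{n}w}\,w_{0}$ and your caution about not confusing this with conjugation by $w_{0}$ are both correct. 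Your argument is shorter and more robust: it makes no use of the specific shape of $_{n}w$, does not rely on Proposition~2.1(i), and sidesteps the fiddly count in the paper (which, incidentally, contains a typo, $\binom{r-4}{2}$ for $\binom{n-4}{2}$). What the paper's computation buys in exchange is the explicit value $\ell(\sigma)=\tfrac{n^{2}-3n-2}{2}$, which your proof only yields indirectly as $\binom{n}{2}-(n+1)$ after invoking Proposition~2.1(i).
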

\begin{proof}
By definition the permutation $\sigma$ is of the form $[n-3,n-1,n-2,n-4,\dots, 3,2,1,n]$, so, the value $n-3$ has $n-4$ inversions associated to it, the value $n-1$ has $n-3$ associated inversions while the value $n-2$ gives $n-4$ inversions. There are ${r-4\choose 2}$ inversions within the  values $n-4,\dots,3,2,1$ being in decreasing order. Therefore, $\ell(\sigma)=\frac{n^{2}-3n-2}{2}$. Recall that $\ell(_{n}w)=n+1$ so that $\ell(_{n}w)+\ell(\sigma) = {n\choose 2}$.
\end{proof}

\section{Graph of Redced Words of $_{n}w$}
\noindent In this section we study the graphs of reduced words of the permutations $_{n}w$ for $n\geq 4$. The reduced words constitute the vertices of the graphs in question while the edges are given either by commutation or braid relation. For instance, if $n = 4$, then the set $R(_{4}w)$ of reduced words of the permutation $_{4}w = 4231$ is given by
\begin{equation}
R(_{4}w) =\{ 32123, 31213, 13213, 31231, 13231, 12321 \}.
\end{equation}
Its graph, denoted by $\mathcal{G}_{_{4}w}$ is given in Figure 3 . The first goal is to give a closed formula for the order of the graph $\mathcal{G}_{_{n}w}$ for any $n$, that is, the number of the vertices of $\mathcal{G}_{_{n}w}$. This is the cardinality $r(_{n}w)$ for the set $R(_{n}w)$. The following proposition is very important in order to count the vertices of $\mathcal{G}_{_{n}w}$.

\begin{proposition}
Let ${\bf a}\in R(_{n}w)$ then {\bf a} has exactly 2 ascents and $n-2$ descents
\end{proposition}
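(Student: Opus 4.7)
Since $\ell(_{n}w)=n+1$ by Proposition~2.1(i), every reduced word $\mathbf{a}=a_1\cdots a_{n+1}\in R(_{n}w)$ has $n+1$ letters and therefore presents $n$ consecutive pairs $(a_j,a_{j+1})$. No two consecutive letters of a reduced word can coincide (otherwise $s_i^2=e$ would strictly shorten it), so every such pair is either an ascent or a descent, and $|\mathrm{Asc}(\mathbf{a})|+|\mathrm{Des}(\mathbf{a})|=n$. It therefore suffices to prove $|\mathrm{Asc}(\mathbf{a})|=2$.

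My plan is to verify the ascent count on one distinguished reduced word and then transport it to every other element of $R(_{n}w)$ via the connectivity of the reduced word graph $\mathcal{G}_{_{n}w}$. The natural candidate is the ``peaked'' word $\mathbf{a}_0=(n-3)(n-2)(n-1)(n-2)(n-3)\cdots 2\,1$: direct multiplication (compatible with $\mathbf{a}_0=12321$ for $n=4$, $\mathbf{a}_0=234321$ for $n=5$, $\mathbf{a}_0=3454321$ for $n=6$) confirms that $s_{a_1}s_{a_2}\cdots s_{a_{n+1}}={_{n}w}$, and the only ascending transitions occur at $(n-3,n-2)$ and $(n-2,n-1)$, so $|\mathrm{Asc}(\mathbf{a}_0)|=2$.

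To propagate this equality to every reduced word, I would show that the two elementary moves generating $\mathcal{G}_{_{n}w}$ both preserve $|\mathrm{Asc}|$. A braid $s_is_{i+1}s_i\leftrightarrow s_{i+1}s_is_{i+1}$ swaps the local pattern between $AD$ and $DA$ inside the triple; the two boundary comparisons are unchanged because reducedness forces the neighbors of the triple to lie outside $\{i,i+1\}$ and therefore to compare identically with both $i$ and $i+1$. For a commutation $s_\alpha s_\beta\leftrightarrow s_\beta s_\alpha$ with $|\alpha-\beta|\geq 2$ at positions $k,k+1$, a short case analysis at the three affected positions shows that $\Delta|\mathrm{Asc}|\in\{-1,0,+1\}$, with $\Delta|\mathrm{Asc}|=0$ precisely when exactly one of the boundary letters $a_{k-1},a_{k+2}$ lies in the open interval $(\min(\alpha,\beta),\max(\alpha,\beta))$. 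The essential structural claim to verify is that every commutation admissible in a reduced word of $_{n}w$ satisfies this ``balanced'' condition; this I would establish by a case analysis on $(\alpha,\beta)$ using the explicit form of $_{n}w$ together with the support $\mathrm{supp}(_{n}w)=\{1,\dots,n-1\}$ implied by Proposition~2.1. By Matsumoto's theorem $\mathcal{G}_{_{n}w}$ is connected under braid and commutation, so combining the base count $|\mathrm{Asc}(\mathbf{a}_0)|=2$ with the invariance yields $|\mathrm{Asc}(\mathbf{a})|=2$ and hence $|\mathrm{Des}(\mathbf{a})|=n-2$ for every $\mathbf{a}\in R(_{n}w)$. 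I expect the commutation step to be the main obstacle, as it demands ruling out the unbalanced neighbor configurations that are generically permitted under an arbitrary commutation move.
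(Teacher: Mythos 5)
Your overall strategy is the same as the paper's: anchor the count at one explicit reduced word (your $\mathbf{a}_0=(n-3)(n-2)(n-1)(n-2)\cdots 21$, which is indeed in $R({_n}w)$ and has exactly two ascents), observe that a reduced word of length $\ell({_n}w)=n+1$ has $n$ adjacent pairs none of which can be an equality, and then transport the ascent count to all of $R({_n}w)$ through braid and commutation moves using connectivity of $\mathcal{G}_{_{n}w}$. Your treatment of the braid move is correct and complete (reducedness excludes neighbors in $\{i,i+1\}$, so the boundary comparisons are unaffected and the internal $AD\leftrightarrow DA$ swap preserves the count), and your analysis of when a commutation preserves the number of ascents (the ``balanced'' condition: exactly one boundary letter strictly between the two commuted letters, with the obvious one-neighbor variant at the ends of the word) is also correct.

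The gap is that the decisive step is only announced, not proved: you defer to ``a case analysis on $(\alpha,\beta)$ using the explicit form of ${_n}w$'' the claim that \emph{every} commutation admissible in a reduced word of ${_n}w$ is balanced, and as you yourself note this is the main obstacle. Without it the propagation argument does not go through, since a generic commutation (e.g.\ $13\leftrightarrow 31$ with no intermediate neighbor) changes the ascent count by $\pm 1$; ruling such configurations out for ${_n}w$ requires genuine structural information about which local patterns occur in its reduced words, which is essentially the content of the proposition itself, so it cannot simply be waved at. To be fair, the paper's own proof is no more rigorous on exactly this point: it asserts outright that ``the number of ascents or descents is invariant under commutation and braid relations,'' which is false for arbitrary commutations and is precisely the subtlety you isolate. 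So your proposal correctly identifies, but does not close, the same hole; a complete argument would either carry out the balancedness case analysis (for instance by induction along the word, or by characterizing the possible letters adjacent to a commutable pair in a reduced word of ${_n}w$), or bypass propagation altogether, e.g.\ via an explicit description of $R({_n}w)$ or an Edelman--Greene/tableau argument pinning down the descent sets directly.
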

\begin{proof}
Notice that  by induction there only two degree one vertices in  the graph $\mathcal{G}_{_{n}w}$ of the reduced words of the permutation $_{n}w$ with 2 ascents each, these are  $(n-1)(n-2)(n-3)\cdots 321(n-2)(n-1)$ and $(n-3)(n-2)(n-1)(n-2)\cdots 321$. Since these vertices are respectively located at the top  and bottom of the graph, so every reduced word of $_{n}w$ lies on the paths  between them. The number of ascents or descents is invariant under  commutation and braid relations.
\end{proof}
 \noindent We now give a combinatorial object $\mathcal{C}_{n-2,1,1}$ called the recorder which shall simultaneously keep track of both descent and ascent positions of every reduced word of the permutation $_{n}w$. We recall some basic facts: A partition $\lambda$ of $n\in\N$ denoted $\lambda\vdash n$  is a list $\lambda=(\lambda_1\geq\lambda_2\geq\cdots\geq\lambda_k)$ such that $\lambda_1+\lambda_2+\dots +\lambda_k= |\lambda|=n$. To each partition $\lambda\vdash n$ there is an associated  diagram $Y(\lambda)$ called the Young diagram of shape $\lambda$ consisting of $|\lambda|$ boxes having $k$ left justified rows with row $i$ containing $\lambda_i$ boxes for $1\leq i\leq k$. For example if $\lambda = 5,4,3,2,1$ then its Young diagram is given by
\[\Ylinethick{1pt}\yng(5,4,3,2,1)\]
\noindent There are many ways of describing a Young tableau associated with  the Young diagram of shape $\lambda\vdash n$. The reader is referred to [6], [10], [13]. We care about the row-strict tableaux which are given by the filling  the boxes of the Young diagram of shape $\lambda\vdash n$  with numbers from the set $[n]:=\{1,\dots,n\}$ such that the numbers in the rows strictly increase from left to right.  Let ${\rm RST}_{n}(\lambda)$ denote the set of all row-strict  tableaux of shape $\lambda=(\lambda_{1},\lambda_{2},\dots,\lambda_{k})\vdash n$. The cardinality $|{\rm RST}_{n}(\lambda)|$ of the set  given by the multinomial coefficient:
\begin{equation}
|{\rm RST}_{n}(\lambda)|={r\choose{\lambda_{1},\dots,\lambda_{k}}}.
\end{equation}
The focus is on the sub-collection of row-strict tableaux of the hook shape $(n-2,1,1)\vdash n$, where $n\geq 4$.
\begin{defn}
A row-strict tableau $\tau\in {\rm RST}_{n}(n-2, 1^2)$ is said to be a recording row strict tableau if the numbers in  the last two boxes $\Ylinethick{0.5pt}\yng(1,1)$ of the first column strictly decrease downward.
\end{defn}
\noindent For instance, $\tau = \begin{array}{l}{\young(23,1,4)}\end{array}$ is not a recording row strict tableau, while $\eta=\begin{array}{l}{ \young(23,4,1)}\end{array}$ is.

 \begin{figure}[!hbt]\label{fig_r=5}
          \begin{center}
	  \begin{tikzpicture}[x= 1.7cm, y=1.7cm]
	   \tikzset{vertex/.style = {shape=circle,draw,minimum size=10.5em}}
	    \tikzset{edge/.style = {-,> = latex'}}
                \node (a) at (-1,0.5) {${\color{red}12321}$};
    		\node (b) at (0,1) {${\color{red}13231}$};
		\node (c) at (1,1.5) {${\color{red}13213}$};
		\node (h) at (-1,2.5) {${\color{red}32123}$};
		\node (i) at (0,2) {${\color{red}31213}$};
		\node (j) at (-1,1.5) {${\color{red}31231}$};
		\draw (b)--(c);
		\draw (h)--(i);
		\draw (b)--(j)--(i)--(c); 
		\draw (a)--(b);
\end{tikzpicture}
	\end{center}
	\caption{The graph $\mathcal{G}_{_{4}w}$ of the reduced words of $_{4}w=4231$}		
\end{figure}
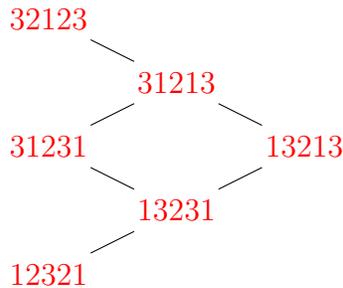

\begin{proposition}
Let $\mathcal{C}_{n-2, 1,1}$ be the set of recording  row-strict tableaux in ${\rm RST}_{n}(n-2, 1,1)$. Then $|\mathcal{C}_{n-2, 1,1}|= \frac{1}{2} {n\choose{n-2, \ 1,\ 1}}$. 
\end{proposition}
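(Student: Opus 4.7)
The plan is to exploit a very simple sign-reversing-style involution on the set $\mathrm{RST}_{n}(n-2,1,1)$. First I would invoke equation $(3.2)$ (the multinomial formula for $|\mathrm{RST}_{n}(\lambda)|$) to record the starting point
\[
|\mathrm{RST}_{n}(n-2,1,1)| = \binom{n}{n-2,\,1,\,1} = n(n-1).
\]
Since the second and third rows consist of a single box each, the row-strict condition imposes no constraint on those rows, so an element of $\mathrm{RST}_{n}(n-2,1,1)$ is determined by choosing the $n-2$ entries of the first row (then placed in the unique increasing order) together with an ordered assignment of the two remaining entries to the cells $(2,1)$ and $(3,1)$.

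Next I would observe that the recording condition of Definition~3.3 refers only to the two entries in cells $(2,1)$ and $(3,1)$: the tableau is recording precisely when the entry at $(2,1)$ is strictly greater than the entry at $(3,1)$. Define the map
\[
\phi : \mathrm{RST}_{n}(n-2,1,1) \longrightarrow \mathrm{RST}_{n}(n-2,1,1)
\]
that fixes the first row and swaps the entries of cells $(2,1)$ and $(3,1)$. The output is again a valid row-strict tableau of the same shape, because the hook shape has only one box in each of rows $2$ and $3$, so no row-strict constraint can be violated. Clearly $\phi^{2}$ is the identity and $\phi$ has no fixed points, since the two entries being swapped are always distinct.

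Finally, $\phi$ restricts to a bijection between the set of recording tableaux (those with entry at $(2,1)$ greater than entry at $(3,1)$) and its complement in $\mathrm{RST}_{n}(n-2,1,1)$ (those with the opposite strict inequality); together these two classes exhaust $\mathrm{RST}_{n}(n-2,1,1)$. Hence
\[
|\mathcal{C}_{n-2,1,1}| = \tfrac{1}{2}\,|\mathrm{RST}_{n}(n-2,1,1)| = \tfrac{1}{2}\binom{n}{n-2,\,1,\,1},
\]
which is the claim. There is no serious obstacle here; the only point requiring a sentence of care is that the row-strict condition survives the swap of the two singleton rows, which is immediate from the hook shape.
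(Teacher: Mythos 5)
Your proof is correct and follows essentially the same route as the paper: count all row-strict fillings of the hook $(n-2,1,1)$ via the multinomial coefficient and observe that exactly half of them satisfy the decreasing condition on the two singleton rows. Your fixed-point-free involution swapping the entries of cells $(2,1)$ and $(3,1)$ is simply a cleaner justification of the factor $\tfrac{1}{2}$ that the paper asserts directly.
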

\begin{proof}
The row-strict filling with  numbers from $[n]:=\{1,\dots,n\}$ for the shape $(n-2,1,1)\vdash r$ requires ${{n}\choose n-2}$ choices for the first row, ${{2}\choose 1}$ for the second row and ${1\choose 1}$ for the last row. Therefore, $\mathcal{C}_{n-2, 1,1}|={\frac{1}{2}}{r\choose r-2}\cdot {2\choose 1}\cdot  {1\choose 1 }$, since $\mathcal{C}_{n-2, 1,1}$, by definition, requires numbers in  the last two boxes of the first column to strictly decrease downward.
\end{proof}

\begin{example}
 There are 20 row-strict tableaux associated to the Young diagram of the shape $\lambda=(3,1,1)$, that is,

\[RST(3,1,1)=\left\lbrace \begin{array}{l} \vspace{10pt} \vcenter{\hbox{\young(123,4,5),\, \young(124,3,5),\, \young(125,3,4),\, \young(134,2,5),\, \young(135,2,4),\, \young(145,2,3)}}\\ \vspace{10pt}\vcenter{\hbox{\young(234,1,5),\, \young(235,1,4),\, \young(245,1,3),\, \young(345,1,2),\, \young(123,5,4),\, \young(124,5,3)}}\\  \vspace{10pt}\vcenter{\hbox{\young(125,4,3),\, \young(134,5,2),\, \young(135,4,2),\, \young(145,3,2),\, \young(234,5,1),\, \young(235,4,1)}} \\ \vcenter{\hbox{\young(245,3,1),\, \young(345,2,1)}}\end{array}\right\rbrace .\] 
Exactly 10 of them are the elements of $\mathcal{C}_{3, 1,1}$
\end{example}
\begin{theorem}
Let $\mathcal{G}_{_{n}w}$ be the graph of reduced words of the permutation $_{n}w$. Then the order $r(_{n} w)$ of $\mathcal{G}_{_{n}w}$ is given by the half of the multinomial coefficient;
$$r(_{n}w) = \frac{1}{2} {n\choose{n-2, \ 1,\ 1}}.$$
\end{theorem}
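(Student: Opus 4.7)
The plan is to exhibit an explicit bijection
$$\Phi : R(_{n}w) \longrightarrow \mathcal{C}_{n-2,1,1},$$
after which Proposition 3.4 immediately yields $r(_{n}w) = \frac{1}{2}\binom{n}{n-2,\,1,\,1}$. The heuristic is that a reduced word $\mathbf{a}\in R(_{n}w)$ has length $n+1$, hence $n$ consecutive pairs $(a_{k},a_{k+1})$, and each pair is either an ascent or a descent; by Proposition 3.1 exactly two of these are ascents and $n-2$ are descents, matching the box counts $1{+}1$ and $n-2$ in the hook shape $(n-2,1,1)$.

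First I would construct $\Phi$. Given $\mathbf{a} = a_{1}a_{2}\cdots a_{n+1} \in R(_{n}w)$, Proposition 3.1 guarantees that $\mathrm{Asc}(\mathbf{a}) = \{i,j\}$ with $i<j$ and $\mathrm{Des}(\mathbf{a}) = \{d_{1}<d_{2}<\cdots<d_{n-2}\}$, all drawn from $\{1,2,\ldots,n\}$. Define $\Phi(\mathbf{a})$ to be the row-strict tableau of shape $(n-2,1,1)$ whose first row carries $d_{1},d_{2},\ldots,d_{n-2}$ (left to right), whose box $(2,1)$ carries $j$, and whose box $(3,1)$ carries $i$. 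Since $i<j$, the first-column entries strictly decrease downward below row $1$, so Definition 3.3 is satisfied and $\Phi(\mathbf{a}) \in \mathcal{C}_{n-2,1,1}$.

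For injectivity, suppose $\Phi(\mathbf{a}) = \Phi(\mathbf{b})$. Both words have the common length $n+1$ (Proposition 2.1) and the same ascent/descent pattern, so at every slot $k$ it is fixed whether $a_{k}<a_{k+1}$ or $a_{k}>a_{k+1}$. Combined with the constraint that the product $s_{a_{1}}s_{a_{2}}\cdots s_{a_{n+1}}$ equals $_{n}w$ and that every partial product stays reduced, this should pin down each letter inductively. For surjectivity, I would use the two ``boundary'' reduced words identified in the proof of Proposition 3.1,
$$\alpha = (n-1)(n-2)\cdots 321\,(n-2)(n-1), \qquad \beta = (n-3)(n-2)(n-1)(n-2)\cdots 321,$$
whose ascent sets are $\{n-1,n\}$ and $\{1,2\}$ respectively (corresponding to the two extremal tableaux in $\mathcal{C}_{n-2,1,1}$). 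A systematic sequence of commutation and braid moves applied to $\alpha$ (or $\beta$) produces a reduced word with any prescribed ascent pair $\{i,j\}\subseteq\{1,\ldots,n\}$, thereby realising every tableau of $\mathcal{C}_{n-2,1,1}$ in the image of $\Phi$.

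Once $\Phi$ is shown to be bijective, Proposition 3.4 immediately gives the claimed cardinality. The main obstacle I anticipate is injectivity: the ascent/descent pattern is a coarse statistic, and one must leverage the very rigid combinatorics of $_{n}w$ (length exactly $n+1$, hook cycle type $(n-2,1,1)$ with two consecutive fixed points, only two ascents) to conclude that the pattern together with the reducedness constraint determines the letters. The cleanest route seems to be to show that at each slot $k$, given the letters already placed and the prescribed ascent-or-descent with the next slot, there is at most one simple reflection $s_{a_{k+1}}$ that both respects the pattern and extends the partial product to a reduced expression for $_{n}w$; this is where the specific hook structure of $_{n}w$ must be exploited most carefully.
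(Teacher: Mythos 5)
Your proposal is essentially the paper's own argument: you send each reduced word $\mathbf{a}\in R(_{n}w)$ to the recording row-strict tableau whose first row lists the $n-2$ descent positions and whose first-column boxes carry the two ascent positions in decreasing order, and then invoke Proposition 3.4 to get $r(_{n}w)=\tfrac{1}{2}\binom{n}{n-2,\,1,\,1}$. The paper's proof does exactly this and in fact only asserts the bijection (it checks well-definedness via Proposition 3.1 but never verifies injectivity or surjectivity), so your sketch, which at least names those two verifications as the remaining work, is no less complete than the published argument.
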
  
\begin{proof} It follows from the  bijection between the set $R(_{n} w)$ of the reduced words of the permutation $_{n}w$ and the set $\mathcal{C}_{n-2, 1,1}$  of recording  row-strict tableaux. The reduced words of  $R(_{n}w)$ have  $n-2$ descents and 2 acents each by Proposition 2.1. So to every  ${\bf a}\in R(_{n}w)$ there is a unique recording row strict tableau $\tau\in\mathcal{C}_{n-2, 1,1}$ such that the entries in the first row record the positions of descents in {\bf a} while the last two rows capture the location of ascents in the same reduced words..
\end{proof}
\begin{example}
The fact that $r(_{5}w)=10$ is established from the bijection between the vertices of the graph $\mathcal{G}_{_{5}w}$ of reduced words of the permutation $_{5}w=51342$ and the set $\mathcal{C}_{3, 1,1}$ of recording row strict tableaux  shaped 3,1,1 in which the tableaux are arranged in such a manner that resembles the structure of the graph, see Figure 4.
\end{example}
\noindent  We shall describe in the next section a poset $(\mathcal{C}_{n-2, 1,1},   \leq)$ of  recording row strict tableaux whose Hasse diagram  preserves the structure of the corresponding graph of reduced words.   

\begin{figure}[!hbt]\label{fig_r=5}
	  \begin{tikzpicture}[x= 1.7cm, y=1.7cm]
	   \tikzset{vertex/.style = {shape=circle,draw,minimum size=10.5em}}
	    \tikzset{edge/.style = {-,> = latex'}}
                \node (a) at (-1,0.5) {${\color{red}234321}$};
    		\node (b) at (0,1) {${\color{red}243421}$};
		\node (c) at (1,1.5) {${\color{red}243241}$};
		\node (d) at (2,2) {$243214$};
		\node (e) at (1,2.5) {${\color{blue}423214}$};
		\node (f) at (0,3) {${\color{blue}432314}$};
		\node (g) at (-1,3.5) {${\color{blue}432134}$};
		\node (h) at (-1,2.5) {${\color{red}432341}$};
		\node (i) at (0,2) {${\color{red}423241}$};
		\node (j) at (-1,1.5) {${\color{red}423421}$};
		\draw (b)--(c);
		\draw (h)--(i);
		\draw (e)--(f)--(g);
		\draw (b)--(j)--(i)--(c); 
		\draw (a)--(b);
		 \draw (h)--(f);
		 \draw (i)--(e);
		 \draw (c)--(d);
		 \draw (e)--(d);	
		 
		 \node (k) at (5,0.5) {${\color{red}\young(345,2,1)}$};
    		\node (l) at (6,1) {${\color{red}\young(245,3,1)}$};
		\node (m) at (7,1.5) {${\color{red}\young(235,4,1)}$};
		\node (n) at (8,2) {$\young(234,5,1)$};
		\node (o) at (7,2.5) {${\color{blue}\young(134,5,2)}$};
		\node (p) at (6,3) {${\color{blue}\young(124,5,3)}$};
		\node (q) at (5,3.5) {${\color{blue}\young(123,5,4)}$};
		\node (r) at (5,2.5) {${\color{red}\young(125,4,3)}$};
		\node (s) at (6,2) {${\color{red}\young(135,4,2)}$};
		\node (t) at (5,1.5) {${\color{red}\young(145,3,2)}$};
		\draw (l)--(m);
		\draw (r)--(s);
		\draw (o)--(p)--(q);
		\draw (l)--(t)--(s)--(m); 
		\draw (k)--(l);
		 \draw (r)--(p);
		 \draw (s)--(o);
		 \draw (m)--(n);
		 \draw (o)--(n);
		\end{tikzpicture}	
	\caption{The bijection between the graph $\mathcal{G}_{51342}$ of reduced words of the permutation $_{5}w$ and the set $\mathcal{C}_{3, 1,1}$ of recording row strict tableaux of the shape 3,1,1 arranged in a corresponding manner.}		
\end{figure}
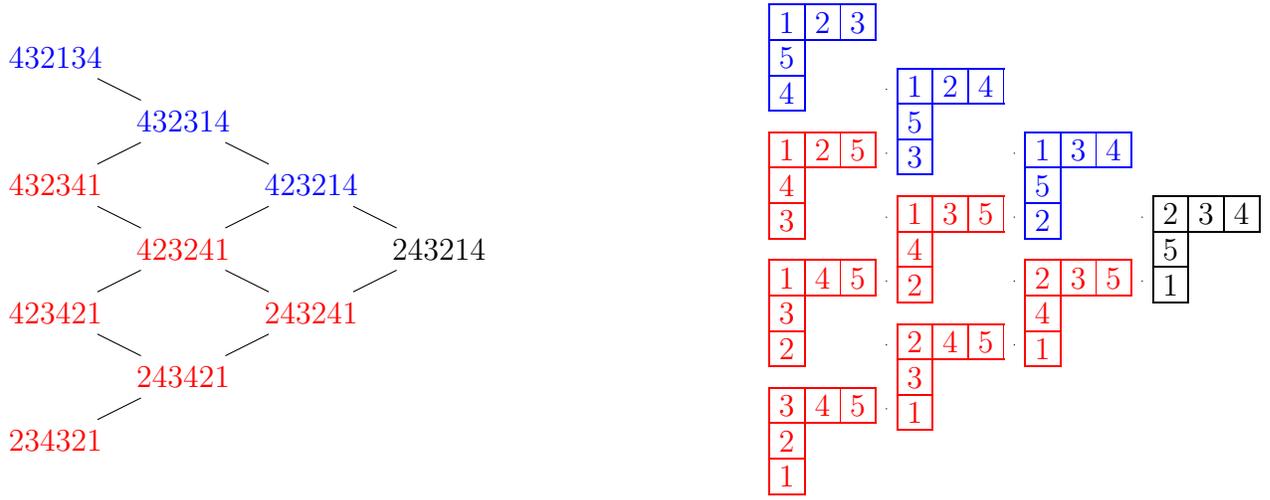
\begin{corollary}
Let $_{n}w\in \mathcal{W}$ be such that $\sigma\in \frak{S}_{n}$ is its reverse, that is, $\sigma_{i} = _{n}w_{n+1-i}$. Then 
$$r(_{n}w) = \ell(_{n}w)+\ell(\sigma).$$
\end{corollary}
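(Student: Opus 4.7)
The plan is to combine two earlier results: Theorem 3.5, which expresses $r(_{n}w)$ as a multinomial coefficient, and Proposition 2.4, which evaluates $\ell(_{n}w)+\ell(\sigma)$. The key observation is that both quantities, once simplified, equal $\binom{n}{2}$.

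First I would rewrite the multinomial coefficient from Theorem 3.5. Expanding
\[
\binom{n}{n-2,\,1,\,1}=\frac{n!}{(n-2)!\,1!\,1!}=n(n-1),
\]
one obtains
\[
r(_{n}w)=\tfrac{1}{2}\binom{n}{n-2,\,1,\,1}=\tfrac{n(n-1)}{2}=\binom{n}{2}.
\]
Next, Proposition 2.4 gives directly $\ell(_{n}w)+\ell(\sigma)=\binom{n}{2}$. Setting the two expressions equal yields the desired identity.

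There is essentially no real obstacle here, as the argument is a one-step comparison of two already-established formulas. The only point worth a sentence of commentary in the write-up is that the appearance of $\binom{n}{2}$ on both sides is not accidental: the half multinomial count on the tableau side of Theorem 3.5 exactly matches the inversion count bookkeeping behind Proposition 2.4, since $\binom{n}{2}$ is the length of the longest permutation $w_{0}\in\mathfrak{S}_{n}$ and both $_{n}w$ and its reverse $\sigma$ are related to $w_{0}$ in a length-complementary way. No further computation is required.
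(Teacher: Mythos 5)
Your proposal is correct and matches the paper's argument: the paper's entire proof is the observation that $\tfrac{1}{2}\binom{n}{n-2,\,1,\,1}=\binom{n}{2}$, combined implicitly with the order formula for $\mathcal{G}_{_{n}w}$ and the earlier proposition computing $\ell(_{n}w)+\ell(\sigma)=\binom{n}{2}$, exactly as you do (your citations ``Theorem 3.5'' and ``Proposition 2.4'' correspond to the paper's Theorem 3.5 and Proposition 2.2). Nothing further is needed.
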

\begin{proof}
Notice that $$\frac{1}{2} {n\choose{n-2, \ 1,\ 1}}={n\choose 2}$$
\end{proof}
\begin{example}
Given $_{5}w=51342$, its reverse  $\sigma=24315$, so $\ell(51342)+\ell(24315)=6 + 4$, so there are precisely 10 reduced words for the permutation $_{5}w$.   
\end{example}

\noindent The symmetry of  the graph $\mathcal{G}_{_{n}w}$ with respect to the regular-triangle orientation displayed in the Fig.4  gives a convenient way of keeping track of the degree of every vertex. Notice that the graph is symmetric along the horizontal line through the degree-two corner vertex  $(n-3)(n-1)\cdots 321(n-1)$. The degree of a vertex in $\mathcal{G}_{_{n}w}$ is determined by the number of all possible moves (braid and commutation) present in the reduced word which constitutes the vertex. For instance, the vertex $423241$ in $\mathcal{G}_{_{5}w}$ is of degree 4 since it has  3 commutations and  one braid. 
\begin{lemma}
Let $v$ be a vertex of the graph $\mathcal{G}_{_{n}w}$ of the reduced words of $_{n}w$. Then the degree of $v$ is at most $4$.
\end{lemma}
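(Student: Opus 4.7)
The plan is to exploit the bijection from the proof of Theorem 3.5, under which each vertex $v = \mathbf{a}$ of $\mathcal{G}_{_n w}$ is encoded by the pair $(p, q)$, $1 \leq p < q \leq n$, recording the positions of its two ascents (equivalently, the entries of the second and third rows of its recording tableau). The first step is to show that every commutation or braid edge incident to $v$ takes $(p, q)$ to one of $(p \pm 1, q)$ or $(p, q \pm 1)$. Since at most four such pairs satisfy $1 \leq p' < q' \leq n$, this will immediately give $\deg(v) \leq 4$.

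For a braid at position $j$, the triple $(a_j, a_{j+1}, a_{j+2}) = (k, k \pm 1, k)$ becomes $(k \pm 1, k, k \pm 1)$, and no other letter of $\mathbf{a}$ changes. Within this triple, exactly one of the adjacency positions $j, j+1$ is an ascent (since $a_{j+1}$ differs by one from $a_j = a_{j+2}$), and the braid swaps the ascent/descent roles at these two positions. The pair at position $j - 1$ compares $a_{j-1}$ with a letter that changes from $k$ to $k \pm 1$; because no integer lies strictly between $k$ and $k \pm 1$, the inequality at $j - 1$ is preserved, and the same argument applies at position $j + 2$. Thus the ascent set moves by a single hop between positions $j$ and $j + 1$.

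For a commutation at position $i$ with $|a_i - a_{i+1}| \geq 2$, the pair at $i$ flips its ascent/descent status; the pair at $i - 1$ flips if and only if $a_{i-1}$ lies strictly between $a_i$ and $a_{i+1}$, and similarly for position $i + 1$. By Proposition 3.1 the post-move word still has exactly two ascents, so the net change in the ascent count across positions $\{i - 1, i, i + 1\}$ must vanish. A short parity check on the possible flip patterns rules out every scenario except ``exactly one of $i - 1, i + 1$ flips, in the direction opposite to the flip at $i$''; at the boundary cases $i = 1$ or $i = n$ only one neighbouring position is available, and the same conclusion holds. Hence a commutation also shifts a single ascent position by $\pm 1$.

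Combining the two cases, every neighbour of $v$ differs from $v$ by a $\pm 1$ shift in one coordinate of $(p, q)$, giving at most four distinct neighbours and therefore $\deg(v) \leq 4$. I expect the main obstacle to be the parity check in the commutation step: one must enumerate the sign patterns of the potential flips at the three positions $\{i - 1, i, i + 1\}$ and confirm that only the single-opposite-flip pattern preserves the ascent count forced by Proposition 3.1. The braid step is essentially rigid once one observes that positions adjacent to the braid triple cannot change status.
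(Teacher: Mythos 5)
Your proof is correct, but it takes a genuinely different route from the paper. The paper argues globally from the regular-triangle drawing of $\mathcal{G}_{_{n}w}$: it splits the vertices into boundary vertices, which it asserts have degree at most $3$ because they lie on the three sides joining the corner words, and interior vertices, which it asserts have degree $4$ from their available commutation and braid moves; the bound is thus read off from the triangular layout exhibited in Figure 5 rather than from a local analysis. You instead encode a vertex by its ascent-position pair $(p,q)$ via the bijection of Theorem 3.5 and show that every braid or commutation edge shifts exactly one ascent position by $\pm 1$ --- for braids, the hop inside the triple together with the observation that the flanking comparisons cannot change; for commutations, the forced flip at position $i$ combined with the parity count imposed by Proposition 3.1 --- so the neighbours of $v$ inject into the at most four pairs $(p\pm 1,q)$, $(p,q\pm 1)$. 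Your route buys rigour and self-containment: it does not presuppose the triangular structure (which the paper never formally establishes) and uses only Proposition 3.1 and the injectivity of the word-to-tableau map, and it moreover makes visible why $\mathcal{G}_{_{n}w}$ sits inside the lattice-point graph of $(n-2)\Delta_{2}$, anticipating Section 5; the paper's route is shorter once the picture is granted and simultaneously yields the finer degree distribution exploited in Theorem 3.11. One point you should make explicit: in the flip analysis the borderline equality cases (e.g.\ $a_{j-1}=k\pm 1$ adjacent to a braid triple, or $a_{i-1}=a_{i+1}$ around a commutation) are excluded because both endpoints of an edge are reduced words and a reduced word never has two equal adjacent letters; with that remark, only the ``strictly between'' condition matters, exactly as you assert.
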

\begin{proof}
Changing the orientation of  $\mathcal{G}_{_{n}w}$ to a regular-triangle, the vertices of $\mathcal{G}_{_{n}w}$ can be divided into two, those that lie on  the boundary  of the  triangle and the interior ones. The boundary vertices are at most of degree 3 since they lie within [$(n-1)(n-2)(n-3)\cdots 321(n-2)(n-1)$ and $(n-3)(n-2)(n-1)(n-2)\cdots 321$],    [$(n-3)(n-1)\cdots 321(n-1)$ and  $(n-3)(n-2)(n-1)(n-2)\cdots 321$] and  [ $(n-3)(n-1)\cdots 321(n-1)$ and $(n-1)(n-2)(n-3)\cdots 321(n-2)(n-1)$]. The interior ones are of degree 4, since they all have $n-1$ commutation moves and exactly one braid move.
\end{proof}
\begin{remark}
The degree bound is  very sharp. In fact, every graph in the family $\mathcal{W}$ except $\mathcal{G}_{_{4}w}$  attains the bound. We now give the distribution of  degrees of the vertices of $\mathcal{G}_{_{n}w}$  in what follows.
\end{remark}
\begin{figure}[htbp]
	\begin{tikzpicture}[x= .6cm, y=.6cm]
	\tikzset{vertex/.style = {shape=circle,draw,minimum size=1.5em}}
	\tikzset{edge/.style = {-,> = latex'}}
	
	\draw[fill=black]	(8,-1) circle (1.8pt);
	\draw[fill=black]	(6,-2) circle (1.8pt);
	\draw[fill=black]	(4,-3) circle (1.8pt);
	\draw[fill=black]	(6,-4) circle (1.8pt);
	\draw[fill=black]	(8,-5) circle (1.8pt);
	\draw[fill=black]	(8,-3) circle (1.8pt);
	
	\draw[fill=black]	(2,-4) circle (1.8pt);
	\draw[fill=black]	(4,-5) circle (1.8pt);
	\draw[fill=black]	(6,-6) circle (1.8pt);
	\draw[fill=black]	(8,-7) circle (1.8pt);
	
	\draw[fill=black]	(10,-4) circle (1.8pt);
	\draw[fill=black]	(14,-4) circle (1.8pt);
	
	\draw[fill=black]	(8.5,-.75) circle (.5pt);
	\draw[fill=black]	(9,-.5) circle (.5pt);
	\draw[fill=black]	(9.5,-.25) circle (.5pt);
	
	\draw[fill=black]	(10,0) circle (1.8pt);
	\draw[fill=black]	(12,1) circle (1.8pt);
	\draw[fill=black]	(14,2) circle (1.8pt);
	
	\draw[fill=black]	(9,-2.5) circle (.5pt);
	\draw[fill=black]	(10,-2) circle (.5pt);
	\draw[fill=black]	(11,-1.5) circle (.5pt);
	
	\draw[fill=black]	(12,-1) circle (1.8pt);
	\draw[fill=black]	(14,0) circle (1.8pt);

	\draw[fill=black]	(11,-3.5) circle (.5pt);
	\draw[fill=black]	(12,-3) circle (.5pt);
	\draw[fill=black]	(13,-2.5) circle (.5pt);
	
	\draw[fill=black]	(14,-2) circle (1.8pt);
	
	\draw[fill=black]	(14,-2.5) circle (.5pt);
	\draw[fill=black]	(14,-3) circle (.5pt);
	\draw[fill=black]	(14,-3.5) circle (.5pt);
	
	\draw[fill=black]	(10,-8) circle (1.8pt);
	\draw[fill=black]	(12,-9) circle (1.8pt);
	\draw[fill=black]	(14,-10) circle (1.8pt);
		
	\draw[fill=black]	(8.5,-7.25) circle (.5pt);
	\draw[fill=black]	(9,-7.5) circle (.5pt);
	\draw[fill=black]	(9.5,-7.75) circle (.5pt);
	
	\draw[fill=black]	(12,-7) circle (1.8pt);
	\draw[fill=black]	(14,-8) circle (1.8pt);
	
	\draw[fill=black]	(9,-5.5) circle (.5pt);
	\draw[fill=black]	(10,-6) circle (.5pt);
	\draw[fill=black]	(11,-6.5) circle (.5pt);
	
	\draw[fill=black]	(14,-6) circle (1.8pt);
	
	\draw[fill=black]	(12,-5) circle (1.8pt);

	\node (A) at (-1.5,-3.7) {$(n-3)(n-1)\cdots 321(n-1)$};
	\node (B) at (15,-10.5) {$(n-3)(n-2)(n-1)(n-2)\cdots 321$};
	\node (C) at (15,2.5) {$(n-1)(n-2)(n-3)\cdots 321(n-2)(n-1)$};
	\node (D) at (18,-4) {$\text{line of symmetry}$};

	\draw[thick] (8,-1)--(6,-2)--(4,-3)--(6,-4)--(8,-5);
	\draw[thick] (6,-2)--(8,-3)--(6,-4);
	\draw[thick] (2,-4)--(4,-5)--(6,-6)--(8,-5);
	\draw[thick] (2,-4)--(4,-3);
	\draw[thick] (4,-5)--(6,-4);
	\draw[thick] (6,-6)--(8,-7);
	
	\draw[thick] (8,-3)--(10,-4)--(8,-5);
	\draw[thick] (12,1)--(14,0)--(12,-1)--(10,0)--(12,1)--(14,2);
	\draw[thick] (14,-2)--(12,-1);
	
	\draw[thick] (12,-9)--(14,-8)--(12,-7)--(10,-8)--(12,-9)--(14,-10);
	
	\draw[thick] (14,-6)--(12,-7);
	
	\draw[thick] (14,-4)--(12,-5)--(14,-6);
	
	\draw[dashed] (0,-4)--(16,-4);
	
	\end{tikzpicture}
	\caption{The graph $\mathcal{G}_{n^w}$ of reduced words of the permutation $n^w$.}
\end{figure}
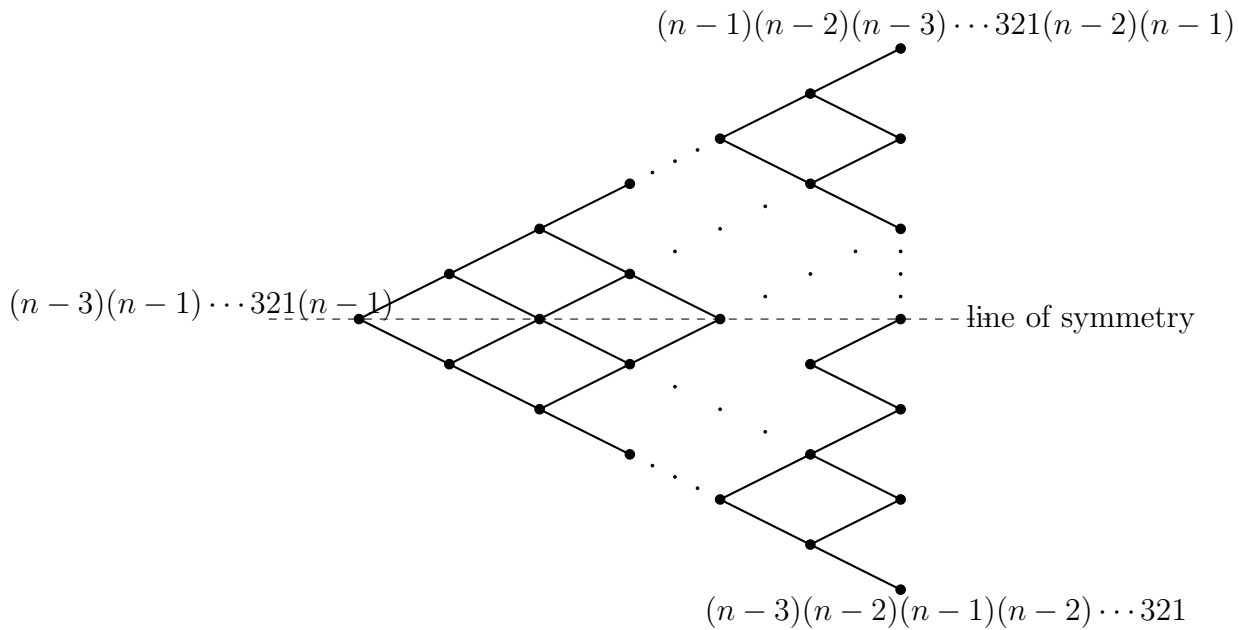

\begin{theorem}
Let $\mathcal{G}_{_{n}w}$ be the graph of reduced words of the permutation $_{n}w$. Then the vertex-degree polynomial  is given by
$${\rm P}_{\mathcal{G}_{_{n}w}}(d)= 2d +(n-2)d^2 + (2n-6)d^{3}+{n-3\choose 2}d^4$$
with $$\sum_{v\in\mathcal{G}_{_{n}w}}{\rm deg}v =4{n-1\choose 2}.$$
Furthermore, there are precisely ${n-2\choose 2}$ 4 cycles in  $\mathcal{G}_{_{n}w}$.
\end{theorem}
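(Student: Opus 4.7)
The plan is to prove the three assertions by exploiting the triangular layout of $\mathcal{G}_{_{n}w}$ sketched in Figure~5, the bijection $R(_{n}w)\longleftrightarrow\mathcal{C}_{n-2,1,1}$ of Theorem~3.5, and the degree bound $\deg v\leq 4$ from Lemma~3.9. The strategy is to partition the $\binom{n}{2}$ vertices into four classes by degree and read off the three assertions from this classification.

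The two degree-$1$ vertices are the ``apex'' words $(n-1)(n-2)\cdots 321(n-2)(n-1)$ and $(n-3)(n-2)(n-1)(n-2)\cdots 321$ already singled out in the proof of Proposition~3.1; a direct check shows that each admits exactly one braid and no commutation. The degree-$2$ vertices comprise the third-corner word $(n-3)(n-1)(n-2)\cdots 321(n-1)$ together with the vertices on the ``vertical'' boundary joining the two apexes through this corner; by the horizontal symmetry exhibited in Figure~5, their total count is $n-2$. The two slanted sides each contribute $n-3$ degree-$3$ vertices by symmetry, for a total of $2n-6$. Every remaining vertex is interior and---by a direct analysis of its admissible moves---realizes the bound of Lemma~3.9, so has degree exactly $4$; by subtraction its count is
\begin{equation*}
\binom{n}{2}-2-(n-2)-(2n-6)=\binom{n-3}{2}.
\end{equation*}
This yields the claimed polynomial, and the sum-of-degrees identity follows from the algebraic simplification
\begin{equation*}
2+2(n-2)+3(2n-6)+4\binom{n-3}{2}=2(n-1)(n-2)=4\binom{n-1}{2}.
\end{equation*}

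For the number of $4$-cycles, I would observe that each $4$-cycle arises from a vertex $\mathbf{a}$ admitting two \emph{independent} moves---i.e., moves whose supported position sets are disjoint---since applying them in either order traces out the $4$-cycle. A direct count of (vertex, unordered independent pair) across all vertices, divided by $4$, collapses to $\binom{n-2}{2}$; alternatively this can be seen by using the bijection with $\mathcal{C}_{n-2,1,1}$ to identify each $4$-cycle with an unordered pair of compatible ``slots'' in the recording tableau. The main obstacle will be the boundary-versus-interior dichotomy in the classification above: rigorously identifying which reduced words lie on each slanted side and which sit in the interior. My approach is to parametrize reduced words by their recording tableaux and to observe that the slanted boundaries correspond to tableaux whose column entries include an extremal position ($1$ or $n$), while interior tableaux have both column entries strictly between them; an induction on $n$, together with an explicit description of how braid and commutation moves act on recording tableaux, should then close the argument.
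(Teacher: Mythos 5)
Your vertex classification is essentially the paper's own proof: the regular-triangle orientation, the two degree-one corner words, the degree-two corner together with $n-3$ further degree-two vertices on one side, $n-3$ degree-three vertices on each of the two remaining sides, and the degree-four count by subtraction from ${n\choose 2}$. Where you differ is minor: you obtain $\sum_{v}\deg v$ by summing the degree polynomial directly (the paper instead counts edges, $2(n-2)+2(n-3)+\cdots+2$, via bipartiteness and the triangular row decomposition), and you count $4$-cycles by double counting pairs of disjoint moves at a vertex and dividing by $4$ (the paper reads $1+2+\cdots+(n-3)$ squares off the same triangular grid). Your $4$-cycle route additionally needs the unproved (though true) fact that every $4$-cycle in a reduced-word graph arises from a pair of moves with disjoint support, and the claim that the incidence count ``collapses to ${n-2\choose 2}$'' is asserted rather than computed; to be fair, the paper's own one-line treatment of this point is no more detailed.

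The genuine gap is in the step you yourself flag as the main obstacle, and the fix you propose fails as stated. The two column entries of a recording tableau are the ascent positions $p<q$ of the reduced word, and your dichotomy (slanted boundary $\Leftrightarrow$ some column entry is extremal; interior $\Leftrightarrow$ both column entries strictly between $1$ and $n$) omits the third boundary family: the $n-3$ degree-two vertices on the side joining the two degree-one corners are exactly the words with \emph{consecutive} ascent positions $\{p,p+1\}$, $2\le p\le n-2$, and these have both column entries strictly between $1$ and $n$. (Incidentally, they lie on the side opposite the degree-two corner, not on a boundary ``through'' it.) Already for $n=5$, the words $423421$ and $432341$, with ascent sets $\{2,3\}$ and $\{3,4\}$, are degree-two boundary vertices, not interior. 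The correct characterisation of the interior is $2\le p$, $q\le n-1$ \emph{and} $q\ge p+2$, giving ${n-3\choose 2}$ vertices, whereas your criterion gives ${n-2\choose 2}$ and would break both the degree polynomial and the handshake identity. To close the argument you must add the non-adjacency condition and then verify, family by family (extremal entry $1$, extremal entry $n$, consecutive pair, generic pair), how many commutation and braid moves the corresponding reduced word admits; once that case analysis is in place, the rest of your plan --- the subtraction count, the degree sum, and the disjoint-move count of $4$-cycles --- goes through.
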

\begin{proof}
 Considering the regular-triangle orientation of $\mathcal{G}_{_{n}w}$ with the three corner vertices: [$(n-1)(n-2)(n-3)\cdots 321(n-2)(n-1)$], [$(n-3)(n-2)(n-1)(n-2)\cdots 321$] and [$(n-3)(n-1)\cdots 321(n-1)$], there are only 2 vertices in $\mathcal{G}_{_{n}w}$ which are of degree one. These are precisely the corner vertices  $(n-1)(n-2)(n-3)\cdots 321(n-2)(n-1)$ and $(n-3)(n-2)(n-1)(n-2)\cdots 321$.  The third corner vertex $(n-3)(n-1)\cdots 321(n-1)$ is of degree 2 and the graph  $\mathcal{G}_{_{n}w}$ is symmetric along the horizontal line through the corner vertex, so, the shortest path between the degree-one corner vertex [$(n-1)(n-2)(n-3)\cdots 321(n-2)(n-1)$] and the degree-two corner vertex $(n-3)(n-1)\cdots 321(n-1)$ consists of $n-3$ degree-three inner vertices likewise on the other side of the line of symmetry. These account for all the $2n-6$ degree-three vertices of $\mathcal{G}_{_{n}w}$. Also from the regular-triangle orientation of $\mathcal{G}_{_{n}w}$, there are exactly $n-3$ degree-two inner vertices which lie  on the boundary between [$(n-1)(n-2)(n-3)\cdots 321(n-2)(n-1)$] and [$(n-3)(n-2)(n-1)(n-2)\cdots 321$] making the total number of degree-two vertices in $\mathcal{G}_{_{n}w}$ $n-2$. Since there are ${n\choose 2}$ vertices in $\mathcal{G}_{_{n}w}$ and the degree of every vertex is at most four. Therefore, there are exactly ${n-3\choose 2}$ vertices which are of degree four. Next, we show that the number of edges in $\mathcal{G}_{_{n}w}$ is $2{n-1\choose 2}$. Notice that $\mathcal{G}_{_{n}w}$ is a bipartite graph and the fact that the order of $\mathcal{G}_{_{n}w}$ is a triangular number gives rise to the partition of the vertices of $\mathcal{G}_{_{n}w}$  into subdivisions of cardinalities $n-1, n-2,\dots, 2,1$, so the total number of edges in $\mathcal{G}_{_{n}w}$  is  $2(n-2)+2(n-3)+\cdots + 4 +2$. Using the same argument, the total number of 4-cycles in $\mathcal{G}_{_{n}w}$ is $1+ 2 +\cdots + n-3.$  
\end{proof}
\begin{corollary}
The generating series for the vertex degree polynomials ${\rm P}_{\mathcal{G}_{_{n}w}}(d)$, $n\geq 4$ is given by
$$\sum_{n\geq 4}^{\infty}{\rm P}_{\mathcal{G}_{_{n}w}}(d)z^{n}=\frac{{\left(d^{4} z^{2} - 2 \, d^{3} z^{2} - d^{2} z^{3} + 2 \, d^{3} z + 3 \, d^{2} z^{2} + 2 \, d z^{3} - 4 \, d^{2} z - 4 \, d z^{2} + 2 \, d^{2} + 2 \, d z\right)} z^{3}}{{\left(1 -z\right)}^{3}}$$
\end{corollary}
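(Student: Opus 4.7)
The plan is to split the generating series into the contributions from each of the four monomials in $d$ appearing in $\mathrm{P}_{\mathcal{G}_{_{n}w}}(d)$. Using the closed form for the vertex-degree polynomial from the preceding theorem, one has
\begin{equation*}
\sum_{n \geq 4} \mathrm{P}_{\mathcal{G}_{_{n}w}}(d)\, z^n \;=\; 2d \sum_{n \geq 4} z^n \;+\; d^2 \sum_{n \geq 4} (n-2)\, z^n \;+\; 2 d^3 \sum_{n \geq 4} (n-3)\, z^n \;+\; d^4 \sum_{n \geq 4} \binom{n-3}{2} z^n,
\end{equation*}
so the problem reduces to summing four power series in $z$ whose coefficients are simple polynomials in $n$.

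Next, I would evaluate each of the four inner sums in closed form. Starting from the classical identities $\sum_{n \geq 0} z^n = (1-z)^{-1}$, $\sum_{n \geq 0} n\, z^n = z (1-z)^{-2}$, and $\sum_{n \geq 0} \binom{n}{2} z^n = z^2 (1-z)^{-3}$, together with an appropriate shift of index to accommodate the lower bound $n \geq 4$, a direct computation yields
\begin{equation*}
\frac{z^4}{1-z}, \qquad \frac{z^4(2-z)}{(1-z)^2}, \qquad \frac{z^4}{(1-z)^2}, \qquad \frac{z^5}{(1-z)^3}
\end{equation*}
for the four sums in order.

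The last step is to combine these four contributions over the common denominator $(1-z)^3$. Multiply the first fraction by $(1-z)^2$, the second and third by $(1-z)$, and leave the fourth unchanged, then add the resulting polynomial numerators and collect terms grouped by joint powers of $d$ and $z$. The output should then be brought into the particular form displayed in the statement of the corollary.

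The entire derivation is elementary; the main obstacle is purely algebraic bookkeeping, namely verifying that the collected polynomial numerator matches \emph{verbatim} the unfactored expression given in the corollary rather than a superficially different but equivalent rearrangement. As a sanity check one can compare the leading Taylor coefficients of the proposed closed form against $\mathrm{P}_{\mathcal{G}_{_{4}w}}(d) = 2d + 2d^2 + 2d^3$ and $\mathrm{P}_{\mathcal{G}_{_{5}w}}(d) = 2d + 3d^2 + 4d^3 + d^4$ computed from the preceding theorem.
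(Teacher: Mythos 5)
Your decomposition by powers of $d$ is the natural (and surely the intended) derivation --- the paper states this corollary with no proof at all, directly after Theorem 3.8 --- and all four of your partial sums are correct: $\sum_{n\ge 4}z^{n}=z^{4}/(1-z)$, $\sum_{n\ge 4}(n-2)z^{n}=z^{4}(2-z)/(1-z)^{2}$, $\sum_{n\ge 4}(n-3)z^{n}=z^{4}/(1-z)^{2}$ and $\sum_{n\ge 4}\binom{n-3}{2}z^{n}=z^{5}/(1-z)^{3}$. The problem is precisely the step you deferred. Combining these over the common denominator $(1-z)^{3}$ gives
\[
\sum_{n\geq 4}{\rm P}_{\mathcal{G}_{_{n}w}}(d)\,z^{n}
=\frac{\bigl(d^{4}z^{2}+2d^{3}z-2d^{3}z^{2}+2d^{2}z-3d^{2}z^{2}+d^{2}z^{3}+2dz-4dz^{2}+2dz^{3}\bigr)z^{3}}{(1-z)^{3}},
\]
whose $d^{2}$-part of the numerator is $d^{2}z^{3}\,(2z-3z^{2}+z^{3})$, while the corollary prints $d^{2}z^{3}\,(2-4z+3z^{2}-z^{3})$. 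The two numerators differ by $2d^{2}z^{3}(1-z)^{3}$, so the rational function displayed in the corollary equals the true generating series plus the extraneous term $2d^{2}z^{3}$; in particular its expansion begins $2d^{2}z^{3}+\cdots$, which is impossible for a series supported on $n\ge 4$. Hence the verbatim match you promise in your last step cannot be achieved: carried to completion, your computation proves the corrected identity above (equivalently, the printed statement would require an artificial $n=3$ contribution equal to $2d^{2}$), not the statement as printed.

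A further caution about your sanity check: comparing the coefficients of $z^{4}$ and $z^{5}$ against ${\rm P}_{\mathcal{G}_{_{4}w}}(d)$ and ${\rm P}_{\mathcal{G}_{_{5}w}}(d)$ would not detect this discrepancy, because the printed formula has the correct coefficients for every $n\ge 4$ and errs only in the coefficient of $z^{3}$. The decisive check is that the closed form must vanish to order $4$ at $z=0$, which your combined expression does and the printed one does not. So either flag and correct the $d^{2}$-terms of the stated numerator, or your proof is of a statement that is not literally the one in the corollary.
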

\begin{remark}
The vertex-degree polynomial of $\mathcal{G}_{_{n}w}$ is closely connected with the Ehrhart polynomial of the $(n-2)$ dilation of the standard 2-simplex $\Delta_{2}$. In fact, this gives rise to  the existence of an isomorphism between the graph  $\mathcal{G}_{_{n}w}$ of reduced words of the permutation  ${_{n}w}$ and the lattice point graph $\mathcal{G}_{(n-2)\Delta_{2}}$ of the $(n-2)$-fold dilation of the standard 2-simplex.
\end{remark}
\section{The Hasse diagram of the Set $\mathcal{C}_{n-2,1,1}$}
\noindent The bijection between the set $R(_{n}w)$ of reduced words of the permutation $_{n}w$ and the set $\mathcal{C}_{n-2,1,1}$ of recording row strict tableaux induces an isomorphism between  the graph $\mathcal{G}_{_{n}w}$ of reduced words of the permutation $_{n}w$  and the Hasse diagram of the poset  $(\mathcal{C}_{n-2,1,1}, \leq)$ of recording row strict tableaux. We first describe the ordering on the recording row strict tableaux in $\mathcal{C}_{n-2,1,1}$  as follows.
\begin{defn}
Let $\tau_{1} = {\textup {\begin{tabular}{|c|c|c|c|}
\cline{1-4} $m_1$ & $m_{2}$   & . . .  &$m_{r-2}$ \\
\cline{1-4} $n_1$ \\
\cline{1-1}$n_2$ \\
\cline{1-1}\multicolumn{1}{c}{} \end{tabular}}}$ and $\tau_{2} = {\textup {\begin{tabular}{|c|c|c|c|}
\cline{1-4} $m'_1$ & $m'_{2}$   & . . .  &$m'_{r-2}$ \\
\cline{1-4} $n'_1$ \\
\cline{1-1}$n'_2$ \\
\cline{1-1}\multicolumn{1}{c}{} \end{tabular}}}$ be any two recording row-strict tableaux in $\mathcal{C}_{n-2,1,1}$. We say  $\tau_{1} \leq \tau_{2}$ if $m_{i} \geq m'_{i}$ for  $1\leq i\leq n-2$ and $n_{i} \leq n'_{i}$ for $1\leq i\leq 2$, otherwise they are incomparable. 
\end{defn}
\begin{defn}
We say $\tau_{2}$ covers $ \tau_{1}$ and write $\tau_{2}\succ\tau_{1}$ if there does not exist $\tau_{3}\in \mathcal{C}_{n-2, 1,1}$ such that $\tau_{1}<\tau_{3}<\tau_{2}$.
\end{defn}
\noindent Given a recording row strict tableau $\tau \in \mathcal{C}_{n-2,1,1}$, its row reading, denoted by $_{\tau}w$ is obtained by listing the row entries of $\tau$ starting  from bottom upward and going from left to right. For instance, the row reading of  $\tau_{1} = {\textup {\begin{tabular}{|c|c|c|}
\cline{1-3} 1 & 4  &  5 \\
\cline{1-3} 3 \\
\cline{1-1} 2\\
\cline{1-1}\multicolumn{1}{c}{} \end{tabular}}} \in \mathcal{C}_{3,1,1}$     is given by $_{\tau}w =23145$. Notice that  $_{\tau}w$ has a unique descent, therefore, it is Grassmannian. 
\begin{remark}
 The set $\mathcal{R}^{_{\tau}w}_{n}$  of row-readings of the recording row strict tableaux in $\mathcal{C}_{n-2, 1,1}$ constitutes the Grassmannian permutations whose associated partitions fit into the rectangle $n-2\times 2$. It is important to point out that these are precisely the indexing permutations of the Schubert varieties inside Grassmannian ${\rm Gr}(2,n)$ of planes in an $n$-dimensional complex vector space. In fact, There is a projection
 \begin{equation}
\pi : \mathcal{F}\ell _{n}(\C)\longrightarrow Gr(2,n)
\end{equation}
 from the full flag variety $\mathcal{F}\ell _{n}(\C)$ to the Grassmannian Gr(2,n) with $\pi^{-1}(X_{\lambda}(V_{\bullet})) = X_{w(\lambda)}(V_{\bullet})$, where $X_{\lambda}(V_{\bullet})$ is a Schubert variety in the Grassmannian $Gr(2,n)$. The permutation $w(\lambda)$  identified with the partition $\lambda=(\lambda_{1},\lambda_{2})$ is given by
 \begin{equation}
w_{i}=i+\lambda_{3-i}, \ 1\leq i\leq 2  \ {\rm and} \ w_{j}< w_{j+1},\ 3\leq j\leq n.
\end{equation}
\end{remark}
\begin{proposition}
Let $\tau_{1} = {\textup {\begin{tabular}{|c|c|c|c|}
\cline{1-4} $m_1$ & $m_{2}$   & . . .  &$m_{r-2}$ \\
\cline{1-4} $n_1$ \\
\cline{1-1}$n_2$ \\
\cline{1-1}\multicolumn{1}{c}{} \end{tabular}}}$ and $\tau_{2} = {\textup {\begin{tabular}{|c|c|c|c|}
\cline{1-4} $m'_1$ & $m'_{2}$   & . . .  &$m'_{r-2}$ \\
\cline{1-4} $n'_1$ \\
\cline{1-1}$n'_2$ \\
\cline{1-1}\multicolumn{1}{c}{} \end{tabular}}}\in \mathcal{C}_{n-2,1,1}$ be two recording row strict tableaux. Then $\tau_{2}$ covers $\tau_{1}$ if and only if $\ell(_{\tau_{2}}w) = \ell(_{\tau_{1}}w)+1$.
\end{proposition}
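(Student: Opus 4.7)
The plan is to reduce the proposition to the explicit length formula
\[
\ell({}_{\tau}w) \;=\; n_1 + n_2 - 3.
\]
To derive this I would first observe that a recording row-strict tableau $\tau\in\mathcal{C}_{n-2,1,1}$ is completely determined by its pair of column entries $(n_1,n_2)$ with $n_1>n_2$, since the top row is forced to be the increasing listing of $[n]\setminus\{n_1,n_2\}$. The row reading is then ${}_{\tau}w=n_2\,n_1\,m_1\,m_2\cdots m_{n-2}$, and a direct inversion count---every inversion is contributed by $n_2$ or $n_1$ against the increasing tail, giving $(n_2-1)+(n_1-2)$ many---yields the formula. Equivalently, one may apply equation~(4.2) to identify the associated partition $\lambda=(n_1-2,\,n_2-1)$ and invoke the classical fact that $\ell(w(\lambda))=|\lambda|$ for Grassmannian permutations.

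Next I would translate the poset structure into an order on admissible pairs $(a,b)\in[n]^{2}$ with $a>b$. Each tableau is determined by its column pair, and the top-row inequalities $m_{i}\geq m'_{i}$ in the defining order are equivalent to the column-pair inequalities $n_{i}\leq n'_{i}$ (indeed, for any $k\in[n]$ the quantity $k-|[k]\cap\{n_1,n_2\}|$ is monotone in $(n_1,n_2)$, which yields both implications). So the poset on $\mathcal{C}_{n-2,1,1}$ reduces to componentwise $\leq$ on the set of admissible pairs.

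With these reductions both implications are short. For the forward direction, assume $\tau_{2}$ covers $\tau_{1}$ and write $(n'_{1},n'_{2})=(n_{1}+s,\,n_{2}+t)$ with $s,t\geq 0$ and $s+t\geq 1$. If $s\geq 1$ the pair $(n_{1}+1,n_{2})$ is admissible (since $n_{1}+1>n_{1}>n_{2}$) and lies weakly between $\tau_{1}$ and $\tau_{2}$; it equals $\tau_{2}$ only when $(s,t)=(1,0)$, so covering forces this. The case $t\geq 1$ is symmetric via $(n_{1},n_{2}+1)$, which is admissible because $n_{1}>n_{2}+1$ whenever $\tau_{2}$ itself is admissible and $t\geq 1$. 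Either way $s+t=1$, and the length formula gives $\ell({}_{\tau_{2}}w)-\ell({}_{\tau_{1}}w)=1$. The converse runs in reverse: $\ell({}_{\tau_{2}}w)=\ell({}_{\tau_{1}}w)+1$ together with $\tau_{1}\leq\tau_{2}$ forces $(n'_{1},n'_{2})\in\{(n_{1}+1,n_{2}),\,(n_{1},n_{2}+1)\}$, and no admissible pair can fit strictly between, so $\tau_{2}$ covers $\tau_{1}$.

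The main delicate point, and the one I would write out most carefully, is verifying admissibility of the single-step increments in the potentially degenerate configuration $n_{1}=n_{2}+1$; the case analysis above shows this configuration automatically forbids any $t$-increment, so no genuine obstacle arises.
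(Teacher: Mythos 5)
Your proof is correct and follows essentially the same route as the paper's: identify each recording tableau with its column pair $(n_1,n_2)$, note that the order of Definition 4.1 reduces to componentwise comparison of these pairs, and check that covers are precisely the unit increments $(n_1+1,n_2)$ or $(n_1,n_2+1)$, which by $\ell({}_{\tau}w)=n_1+n_2-3$ are precisely the length-one jumps. You spell out what the paper leaves implicit (the explicit length formula and the admissibility of the intermediate pair used to rule out larger jumps), and you rightly add the hypothesis $\tau_1\leq\tau_2$ in the converse; this hypothesis is genuinely needed even though the statement and the paper's proof leave it tacit, since for example when $n=5$ the incomparable recording tableaux with column pairs $(3,2)$ and $(5,1)$ have row-reading lengths $2$ and $3$, yet neither covers the other.
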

\begin{proof}
Suppose that $\tau_{1}$ covers $\tau_{2}$, by definition, we have  $\tau_{1} \leq \tau_{2}$, if $m_{i} \geq m'_{i}$ for  $1\leq i\leq n-2$ and $n_{i} \leq n'_{i}$ for $1\leq i\leq 2$ and  there does not exist $\tau_{3}\in \mathcal{C}_{n-2, 1,1}$ such that $\tau_{1}<\tau_{3}<\tau_{2}$ , so  either $n_{1}'=n_{1}$ and $n_{2}'=n_{2}+1$ or $n_{1}'=n_{1}+1$ and $n_{2}'=n_{2}$, therefore, $\ell(_{\tau_{2}}w) = \ell(_{\tau_{1}}w)+1$. On the other hand, consider the two row readings $_{\tau_{1}}w = n_{1}n_{2}m_{1}\cdots m_{n-2}$ and $_{\tau_{1}}w = n_{1}'n_{2}'m_{1}'\cdots m_{n-2}'$ in which $n_{1}<n_{2}>m_{1}<\cdots < m_{n-2}$ and $n_{1}'<n_{2}'> m_{1}'<\cdots < m_{n-2}'$  such that  $\ell(_{\tau_{2}}w) = \ell(_{\tau_{1}}w)+1$, so either $n_{1}'=n_{1}$ and $n_{2}'=n_{2}+1$ or $n_{1}'=n_{1}+1$ and $n_{2}'=n_{2}$, hence $\tau_{2}$ covers $\tau_{1}$.
\end{proof}
\noindent There is a natural partial order on the set  $\mathcal{R}^{_{\tau}w}_{n}$  of row-readings of the recording row strict tableaux in $\mathcal{C}_{n-2, 1,1}$ induced by the Bruhat order on the symmetric group $\mathfrak{S}_{n}$ described in section 2. Given any two row-readings $_{\tau_{1}}w, \ _{\tau_{2}}w\in\mathcal{R}^{_{\tau}w}_{n}$, we say $_{\tau_{1}}w\leq \  _{\tau_{2}}w$ if $\ell(_{\tau_{1}}w)\leq \ell( _{\tau_{2}}w)$. It is obvious that the Hasse diagram $\mathcal{H}_{\mathcal{R}^{_{\tau}w}_{n}}$ of the poset $(\mathcal{R}^{_{\tau}w}_{n}, \leq)$ is graded by the permutation length. The maximum element is the permutation $[n-1, n, 1, 2,\dots, n-2]$ while the minimum element is $[1,2,\dots, n]$ since every permutation in $\mathcal{R}^{_{\tau}w}_{n}$ has a unique descent at position 2.
\begin{example}
 The Hasse diagram of the poset $(\mathcal{R}^{_{\tau}w}_{4}, \leq)$ is given below
 \begin{figure}[!hbt]\label{fig_r=5}
          \begin{center}
	  \begin{tikzpicture}[x= 1.7cm, y=1.7cm]
	   \tikzset{vertex/.style = {shape=circle,draw,minimum size=10.5em}}
	    \tikzset{edge/.style = {-,> = latex'}}
                \node (a) at (-1,0.5) {${\color{red}1234}$};
    		\node (b) at (0,1) {${\color{red}1324}$};
		\node (c) at (1,1.5) {${\color{red}1423}$};
		\node (h) at (-1,2.5) {${\color{red}3412}$};
		\node (i) at (0,2) {${\color{red}2413}$};
		\node (j) at (-1,1.5) {${\color{red}2314}$};
		\draw (b)--(c);
		\draw (h)--(i);
		\draw (b)--(j)--(i)--(c); 
		\draw (a)--(b);
\end{tikzpicture}
	\end{center}
	\caption{The Hasse diagram of the poset  $(\mathcal{R}^{_{\tau}w}_{4}, \leq)$}		
\end{figure}
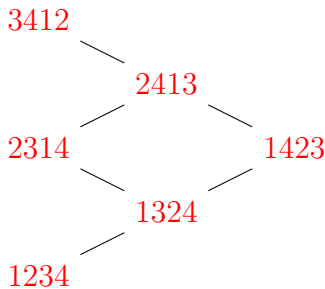
\end{example}
\begin{lemma}
 There is an order preserving bijective map between the posets  $( \mathcal{C}_{n-2, 1,1}, \leq)$  and  $(\mathcal{R}^{_{\tau}w}_{n}, \leq)$ for $n\geq 4$.
\end{lemma}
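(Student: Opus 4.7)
The plan is to take $\phi\colon \mathcal{C}_{n-2,1,1}\to \mathcal{R}^{_{\tau}w}_{n}$ to be the row-reading map $\tau\mapsto {_{\tau}w}$ and verify its two required properties.

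Firstly, I would unpack $\phi$ on a generic $\tau\in\mathcal{C}_{n-2,1,1}$ whose first row has entries $m_1<m_2<\cdots<m_{n-2}$ and whose lower two column boxes contain $n_1, n_2$ (in positions $(2,1)$ and $(3,1)$ respectively) with $n_1>n_2$. Then
$$ _{\tau}w = n_2\,n_1\,m_1\,m_2\cdots m_{n-2}.$$
This word has an ascent at position~$1$ (since $n_2<n_1$) and at every position $\geq 3$ (since $m_1<\cdots<m_{n-2}$), so the only potential descent is at position~$2$; hence $_{\tau}w\in\mathcal{R}^{_{\tau}w}_{n}$. Injectivity is immediate from reading the first two and remaining letters off the word; surjectivity follows because any Grassmannian permutation with at most a single descent at position~$2$ has the form $w_1 w_2 w_3\cdots w_n$ with $w_1<w_2$ and $w_3<\cdots<w_n$, and is the row reading of the tableau whose top row is $w_3\cdots w_n$ and whose column reads $w_3,w_2,w_1$ downward (which is recording because $w_2>w_1$).

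Secondly, I would show that $\phi$ is order preserving via a direct length computation. The only inversions of $_{\tau}w = n_2 n_1 m_1\cdots m_{n-2}$ pair one of $n_1,n_2$ with a smaller $m_i$, and since $n_2<n_1$ there are $n_2-1$ values of $m_i$ below $n_2$ and $n_1-2$ values below $n_1$, yielding
$$\ell(_{\tau}w)=(n_2-1)+(n_1-2)=n_1+n_2-3.$$
Consequently, if $\tau_1\leq\tau_2$ in $\mathcal{C}_{n-2,1,1}$ then the defining inequalities $n_1\leq n_1'$ and $n_2\leq n_2'$ add to give $\ell(_{\tau_1}w)\leq \ell(_{\tau_2}w)$, which is exactly $_{\tau_1}w\leq {_{\tau_2}w}$ in $\mathcal{R}^{_{\tau}w}_{n}$.

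I expect the main obstacle to be not any single calculation but rather notational bookkeeping: keeping consistent the convention of which of $n_1,n_2$ refers to the middle-row box versus the bottom-row box, and matching this to the row-reading order. A minor side point worth including is that the constraint $m_i\geq m_i'$ in the definition of $\leq$ on $\mathcal{C}_{n-2,1,1}$ is in fact a consequence of $n_i\leq n_i'$, since both tableaux exhaust the alphabet $[n]$ and the $m$-row is the sorted complement of $\{n_1,n_2\}$; this is a short combinatorial check on how a two-element subset and its complement move in parallel. Once these ingredients are assembled, Proposition~4.3 can also be invoked to confirm that $\phi$ sends covers to covers, so the bijection is in fact a poset isomorphism and not merely an order-preserving map.
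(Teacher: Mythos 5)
Your proposal is correct and follows essentially the same route as the paper: the row-reading map $\tau\mapsto{_{\tau}w}$, bijectivity, and order preservation deduced from comparing permutation lengths. In fact your explicit computation $\ell({_{\tau}w})=n_1+n_2-3$ (and the remark that the $m$-row condition follows from the column condition by complementation) supplies the justification for the length inequality that the paper's proof merely asserts.
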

\begin{proof}
It is clear that the map $\tau\mapsto {_{\tau}w}_{n}$ is a bijection. We only need to show that it is order preserving. Suppose that $\tau_{1}\leq \tau_{2}$, by Definition 4.1, $m_{i} \geq m'_{i}$ for  $1\leq i\leq n-2$, \  $n_{i} \leq n'_{i}$ for $1\leq i\leq 2$, \  and $1\leq n_{i} , n'_{i}, m_{i} , m'_{i}\leq n$    so that $_{\tau_{1}}w = [ n_{1},n_{2},m_{1},\cdots , m_{n-2}]$ such that $n_{1}<n_{2}>m_{1}<\cdots < m_{n-2}$ and $_{\tau_{2}}w=[n_{1}',n_{2}', m_{1}',\cdots , m_{n-2}']$ such that $n_{1}'<n_{2}'> m_{1}'<\cdots < m_{n-2}'$. Notice that $\ell(_{\tau_{1}}w)\leq \ell( _{\tau_{2}}w)$, therefore, $_{\tau_{1}}w\leq \  _{\tau_{2}}w$.
\end{proof}
\begin{theorem}
The Hasse diagram $\mathcal{H}_{\mathcal{C}_{n-2, 1,1}}$ of the poset  $( \mathcal{C}_{n-2, 1,1}, \leq)$ is graded, with rank function defined by the permutation length given by
$$\rho(\tau) = \ell({_\tau}w)$$
for every $\tau\in \mathcal{C}_{n-2, 1,1}$ where ${_\tau}w$ is the row reading of $\tau$. Furthermore, $\mathcal{C}_{n-2, 1,1}$ is of the rank
$$\rho(\mathcal{C}_{n-2, 1,1})=2(n-2)$$
\end{theorem}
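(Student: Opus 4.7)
The plan is to read this as an almost-direct consequence of Proposition 4.4, which already supplies the crucial cover-step property: whenever $\tau_{2}$ covers $\tau_{1}$ in $(\mathcal{C}_{n-2,1,1},\leq)$, then $\ell({_{\tau_{2}}w}) = \ell({_{\tau_{1}}w}) + 1$. To promote this into a bona fide rank function on the whole poset, I need three ingredients: a minimum element $\hat{0}$ of rank $0$, a maximum element $\hat{1}$, and a saturated chain from $\hat{0}$ up to every $\tau\in\mathcal{C}_{n-2,1,1}$.

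First I would locate the extremal elements. Subject to the row-strict condition and the recording condition $n_{1}>n_{2}$, the element of $\mathcal{C}_{n-2,1,1}$ with the largest top-row entries and smallest column entries is forced to be the tableau $\hat{0}$ with first row $(3,4,\ldots,n)$, middle entry $2$, bottom entry $1$; its row reading is the identity $[1,2,\ldots,n]$, so $\rho(\hat{0})=0$. Symmetrically, $\hat{1}$ has first row $(1,2,\ldots,n-2)$, middle entry $n$, bottom entry $n-1$, with row reading $[n-1,n,1,2,\ldots,n-2]$. A direct inversion count gives $\ell({_{\hat{1}}w})=2(n-2)$, since $n-1$ and $n$ each contribute $n-2$ inversions and the remaining entries appear in increasing order.

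Next I would produce the saturated chain. Given any $\tau\neq\hat{0}$ with column entries $n_{1}>n_{2}$, at least one of the two values $n_{1}-1, n_{2}-1$ lies in the first row — namely $m_{j}=n_{2}-1$ (if $n_{2}>1$) or $m_{j}=n_{1}-1$ (if $n_{2}=1$ and $n_{1}>2$). Swapping this $m_{j}$ with the corresponding column entry (and re-sorting the first row) produces a new $\tau'\in\mathcal{C}_{n-2,1,1}$ with either $(n_{1}',n_{2}')=(n_{1},n_{2}-1)$ or $(n_{1}-1,n_{2})$; in either case $\tau'<\tau$ and, by Proposition 4.4, $\tau$ covers $\tau'$ with $\ell({_{\tau'}w})=\ell({_{\tau}w})-1$. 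Iterating produces a saturated chain of length $\ell({_{\tau}w})$ from $\hat{0}$ to $\tau$. The cover-step property then forces every saturated chain between the same endpoints to have the same length, so $\rho(\tau):=\ell({_{\tau}w})$ is a well-defined rank function and the poset is graded. The rank statement follows at once: $\rho(\mathcal{C}_{n-2,1,1})=\rho(\hat{1})-\rho(\hat{0})=2(n-2)$.

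The main obstacle is the swap-and-re-sort step: one must check that the resulting $\tau'$ is not merely less than $\tau$ but actually covered by $\tau$, i.e.\ that no third element sits strictly between them. This requires a short case analysis showing that any candidate $\tau''$ with $\tau'<\tau''<\tau$ would violate either the componentwise inequalities of Definition 4.1 or the recording condition $n_{1}>n_{2}$; both cases reduce to the observation that the swap changes exactly one of $n_{1},n_{2}$ by exactly one.
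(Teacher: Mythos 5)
Your proposal is correct, but it gets to the conclusion by a more intrinsic route than the paper. The paper's proof does not argue inside $\mathcal{C}_{n-2,1,1}$ at all: it invokes the order-preserving bijection of Lemma 4.7 to identify $\mathcal{H}_{\mathcal{C}_{n-2,1,1}}$ with the Hasse diagram $\mathcal{H}_{\mathcal{R}^{_{\tau}w}_{n}}$ of the poset of row readings (Grassmannian permutations), takes the gradedness of the latter by permutation length as already evident, and then simply reads off the maximum and minimum elements (row readings $[n-1,n,1,2,\dots,n-2]$ and $[1,2,\dots,n]$) and the rank $2(n-2)$. You instead stay with the tableaux, using Proposition 4.4 (covers raise $\ell({_{\tau}w})$ by exactly one) together with the explicit extremal tableaux and a swap-one-column-entry construction of saturated chains from $\hat{0}$; your identification of $\hat{0}$, $\hat{1}$ and the computation $\ell([n-1,n,1,\dots,n-2])=2(n-2)$ agree with the paper. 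What your route buys is precisely the verification the paper labels ``straightforward''/``obvious'': since $\ell({_{\tau}w})=n_{1}+n_{2}-3$ depends only on the column entries, your single-step moves visibly change the length by one and nothing can sit strictly between, so the rank function is genuinely established rather than transferred from an asserted fact. Two small remarks: the explicit chain construction is not strictly needed (once covers raise $\ell$ by one and the minimum has $\ell=0$, any maximal chain in an interval is saturated of the predicted length), and your appeal to the converse direction of Proposition 4.4 is safe only because your $\tau'$ and $\tau$ are comparable with a one-unit change in a single column entry — as stated in the paper that converse needs comparability, which your final paragraph in effect supplies.
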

\begin{proof} It is straightforward to see that the graded structure of the Hasse diagram  $\mathcal{H}_{\mathcal{C}_{n-2, 1,1}}$ of $( \mathcal{C}_{n-2, 1,1}, \leq)$ is realised by identifying it with the Hasse diagram $\mathcal{H}_{\mathcal{R}^{_{\tau}w}_{n}}$ of its corresponding poset $(\mathcal{R}^{_{\tau}w}_{n}, \leq)$ of row-readings of the recording row-strict tableaux. The maximum element is the row-strict tableau $\tau'$ whose reading  $_{\tau'}w$ is the permutation $[n-1, n, 1, 2,\dots, n-2]$ while the permutation of the minimum element $\tau$ is $_{\tau}w=[1,2,\dots, n]$. The length $\ell(_{\tau'}w)$ of the permutation $_{\tau'}w$  associated to the maximum element $\tau'$ is $2(n-2)$, hence the rank of the poset.
\end{proof}

\section{Isomorphism between $\mathcal{G}_{_{n}w}$ and $\mathcal{G}_{(n-2)\Delta_{2}}$ }
\noindent  More is true about the chain of isomorphisms in the Corollary 4.8.  The identification of  the graph $\mathcal{G}_{_n{w}}$ with the Hasse diagrams $\mathcal{H}_{\mathcal{C}_{n-2, 1,1}}$ and $\mathcal{H}_{\mathcal{R}^{_{\tau}w}_{n}}$  reveals a fundamental connection between the graph of reduced words and the lattice point graph $\mathcal{G}_{(n-2)\Delta_{2}}$ of $(n-2)$-fold  dilation of the standard 2- simplex $\Delta_{2}$ . Recall that  the standard $2$-simplex $\Delta_2$ is the convex hull of the set $\{\underline{0}, e_1,e_2\}$ where $e_1, e_2$ are the standard vectors in $\R^{2}$ and $\underline{0}$ is the origin. That is,
\begin{equation}
\Delta_{2}:= \{{\bf x}\in\R ^{2} : {\bf x}\cdot e_{i}\geq 0, \ \ \sum_{i=1}^{2} {\bf x}\cdot e_{i}\leq 1\}
\end{equation}

\noindent and the dilation $k\Delta_{2}$, is  given by

\begin{equation}
k\Delta_{2}=\{{\bf x}\in\R ^{2} :{\bf x}\cdot e_{i}\geq 0, \ \ \sum_{i=1}^{2}{\bf x}\cdot e_{i}\leq k, \ \    k\in \N\}
\end{equation}

\noindent The standard 2- simplex $\Delta_{2}$ is a lattice polytope. Counting the lattice points of any given lattice polytope is the central theme of the Ehrhart theory  [3], [6], [10], [11] and [16]. In fact, the number of the lattice points on $k\Delta_2$ is given by the Ehrhart polynomial 
\begin{equation}
\mathcal{L}_{\Delta_{2}}(k)=|k\Delta_{2}\cap \Z_{\geq 0}^{2}|= {{k+2}\choose 2}.
\end{equation}
\noindent   Our interest is in the graph $\mathcal{G}_{k\Delta_{2}}$ of the set $\mathcal{C}:= k\Delta_{2}\cap \Z_{\geq 0}^{2}$ of lattice points on $k\Delta_{2}$. Notice that the elements of $\mathcal{C}$ are the integer solutions of the inequality 
\begin{equation}
\sum_{i=1}^{2}{\bf x}\cdot e_{i}\leq k
\end{equation}
 In order to construct the graph we define a lexicographical order $<_{\rm lex}$ on the elements of $\mathcal{C}$  as follows:  Let ${\bf a}=(a_1,a_2)$ and ${\bf b}=(b_1,b_2)$ be any two lattice points in $\mathcal{C}$. We say ${\bf a} <_{\rm lex} {\bf b}$ if in the integer coordinate difference  ${\bf a}-{\bf b}\in\Z^{2}$, the first coordinate is negative.  The order induces a directed graph $\stackrel{\rightarrow}{\mathcal{G}_{k\Delta_{2}}}$  whose vertices are given by the elements of the set $\mathcal{C}$ such that there is an edge  between ${\bf a} \  {\rm and}\ {\bf b}$ in $\mathcal{C}$ if  ${\bf b} \  {\rm covers}\ {\bf a}$ and write ${\bf a}<^{c}_{\rm lex}{\bf b}$, that is, there does not exist a lattice point ${\bf c}\in \mathcal{C}$ such that ${\bf a}<_{\rm lex} {\bf c}<_{\rm lex}{\bf b}$. The  sub-lattice graph $\mathcal{G}_{k\Delta_2}$ is the underlying graph of  the digraph $\stackrel{\rightarrow}{\mathcal{G}_{k\Delta_{2}}}$, that is,  the graph containing undirected edges in which all edge fibers are identical.  For instance, the sub-lattice graph  $\mathcal{G}_{3\Delta_2}$ is given in Figure 7 below.
\begin{figure}[!hbt]
	\begin{center}
	  \begin{tikzpicture}[x= 1.7cm, y=1.7cm]
	   \tikzset{vertex/.style = {shape=circle,draw,minimum size=10.5em}}
	    \tikzset{edge/.style = {-,> = latex'}}
		\node (a) at (0,2) {$(0,2)$};
		\node (b) at (1,1) {$(1,1)$};
		\node (c) at (2,0) {$(2,0)$};
		\node (d) at (1,0) {$(1,0)$};
		\node (e) at (0,0) {$(0,0)$};
		\node (f) at (0,1) {$(0,1)$};
		\node (g) at (0,3) {$(0,3)$};
		\node (h) at (1,2) {$(1,2)$};
		\node (i) at (2,1) {$(2,1)$};
		\node (k) at (3,0) {$(3,0)$};
		\draw (a)--(b)--(c)--(d)--(e);
		\draw (g)--(h)--(i)--(k)--(c);
		\draw (d)--(f)--(b);
		\draw (a)--(h);
		\draw (b)--(i);
		\end{tikzpicture}
	\end{center}
	\caption{ The  sub-lattice graph $\mathcal{G}_{3\Delta_2}$}
\end{figure}
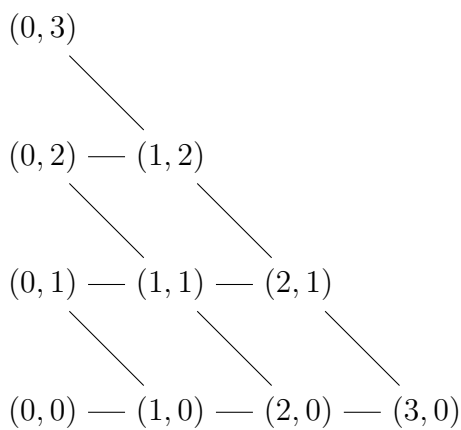
\noindent There is no edge between any pair of vertices of  $\mathcal{G}_{k\Delta_2}$ if they share the first coordinates. By setting $k$ to be $n-2$, it turns out that the vertices of the graph $\mathcal{G}_{k\Delta_2}$ of lattice points in $\mathcal{C}$ encode the fitted partitions associated with the Grassmannian permutations which constitute the vertices of the Hasse diagram Hasse diagram $\mathcal{H}_{\mathcal{R}^{_{\tau}w}_{n}}$ of the poset $(\mathcal{R}^{_{\tau}w}_{n}, \leq)$ described in the Remark 4.3. To see this we associate  to each lattice point ${\bf a}=(a_1,a_2)\in \mathcal{C}$, a weight ${\bf m}_{\bf a}$  given by 
\begin{equation}
{\bf m}_{\bf a} = \sum_{t=1}^{2} ta_{t}
\end{equation}
and partition $\lambda = (\lambda_1, \lambda_2)$ such that $\lambda_{1} = a_{1}+a_{2}$ and $\lambda_{2} =a_{2}$
\begin{proposition}
Let ${\bf a}=(a_1,a_2)$ be a lattice point in $\mathcal{C}$ such that  ${\bf m}_{\bf a}$ and $\lambda=(\lambda_1,\lambda_2)$ are respectively the weight and the partition associated with ${\bf a}$. Then $\lambda$ is a partition of ${\bf m}_{\bf a}$ which fits into the rectangle $\square_{k\times 2}$.
\end{proposition}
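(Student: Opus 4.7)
The proof is essentially a direct verification from the definitions, so my plan is to unpack each of the three claims hidden in the statement and check them against the constraints defining $\mathcal{C}$.

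First, I would verify that $\lambda=(\lambda_1,\lambda_2)=(a_1+a_2,a_2)$ is actually a partition, i.e.\ a weakly decreasing sequence of nonnegative integers. Since ${\bf a}\in\mathcal{C}\subseteq\Z_{\geq 0}^{2}$, both $a_1$ and $a_2$ are nonnegative integers, so $\lambda_1=a_1+a_2$ and $\lambda_2=a_2$ are nonnegative integers. The inequality $\lambda_1\geq\lambda_2$ reduces to $a_1\geq 0$, which holds. Thus $\lambda$ is a partition with at most two parts.

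Next, I would compute $|\lambda|$ and match it to the weight ${\bf m}_{\bf a}$:
\begin{equation*}
|\lambda|=\lambda_1+\lambda_2=(a_1+a_2)+a_2=a_1+2a_2=\sum_{t=1}^{2}t\,a_t={\bf m}_{\bf a},
\end{equation*}
using formula $(5.5)$. So $\lambda\vdash{\bf m}_{\bf a}$.

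Finally, I would use the defining inequality of the dilated simplex to show $\lambda$ fits into $\square_{k\times 2}$. Since the rectangle $\square_{k\times 2}$ accommodates partitions with at most $2$ parts, each of size at most $k$, it suffices to check $\lambda_1\leq k$. Because ${\bf a}\in k\Delta_{2}\cap\Z_{\geq 0}^{2}$, equation $(5.2)$ gives
\begin{equation*}
\lambda_1=a_1+a_2=\sum_{i=1}^{2}{\bf a}\cdot e_i\leq k,
\end{equation*}
which completes the verification. I do not foresee a genuine obstacle here; the only mild point of care is lining up the convention that $\square_{k\times 2}$ is the $k$-wide, $2$-tall rectangle (so that the bound $\lambda_1\leq k$ is the binding one), which is consistent with the earlier discussion of partitions fitting in $\square_{n-2\times 2}$ for Grassmannian permutations in $\mathrm{Gr}(2,n)$ with $k=n-2$.
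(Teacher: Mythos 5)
Your proof is correct and follows essentially the same route as the paper's: compute $\lambda_1+\lambda_2=(1,2)\cdot{\bf a}={\bf m}_{\bf a}$ and use the defining inequality of $k\Delta_2$ to bound the parts by $k$. Your write-up is slightly more complete in that it explicitly checks $\lambda_1\geq\lambda_2\geq 0$ (so $\lambda$ is genuinely a partition), a point the paper leaves implicit.
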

\begin{proof}
$\lambda_1 + \lambda_2 = (1,2)\cdot{\bf a} = {\bf m}_{\bf a}$ for a fixed vector $(1,2)$. Since the maximum lattice point in $\mathcal{C}$, we have  $0\leq \lambda_i\leq k$,  for  $1\leq i\leq 2$, hence $\lambda$ fits into the rectangle $\square_{k\times 2}$.
\end{proof}
\noindent We now give the poset associated to the fitted partition. This is well known in Schubert calculus of Grassmannians. Let $\mathcal{C}_{\square_{k\times 2}}$ be the set of fitted partitions into the rectangle $\square_{k\times 2}$, The partial order on $\mathcal{C}_{\square_{k\times 2}}$ is well known [], for any $\lambda, \mu\in \mathcal{C}_{\square_{k\times 2}}$, we say $\lambda\leq\mu$ if  $0\leq\lambda_{i}\leq\mu_{i}\leq k$, for $1\leq i\leq 2$. This gives rise to the Young's lattice $\mathcal{Y}_{\mathcal{C}_{\square_{k\times 2}}}$ on the corresponding Young diagrams. It is graded since the rank function is given by the number of boxes in each of the diagrams. For instance, the Young's lattice of $\mathcal{C}_{\square_{3\times 2}}$ is given in the Figure 7.

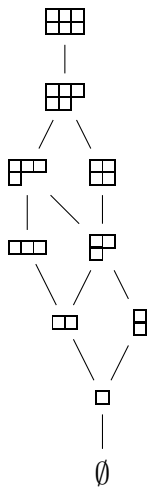
\begin{figure}[!hbt]
               \begin{center}
		\begin{tikzpicture}
		\node (a) at (0,0) {$\emptyset$};
		\node (b) at (0,1) {$\Yboxdim{5pt}\tiny\yng(1)$};
		\node (c) at (0.5,2) {$\Yboxdim{5pt}\tiny\yng(1,1)$};
		\node (d) at (-0.5,2) {$\Yboxdim{5pt}\tiny\yng(2)$};
		\node (e) at (0,3) {$\Yboxdim{5pt}\tiny\yng(2,1)$};
		\node (g) at (0,4) {$\Yboxdim{5pt}\tiny\yng(2,2)$};
		\node (f) at (-1,3) {$\Yboxdim{5pt}\tiny\yng(3)$};
		\node (h) at (-1,4) {$\Yboxdim{5pt}\tiny\yng(3,1)$};
		\node (i) at (-0.5,5) {$\Yboxdim{5pt}\tiny\yng(3,2)$};
		\node (j) at (-0.5,6) {$\Yboxdim{5pt}\tiny\yng(3,3)$};
		\draw (a)--(b)--(c)--(e)--(g)--(i)--(j);
		\draw (b)--(d)--(f)--(h)--(i);
		\draw (d)--(e);
		\draw (e)--(h);
	\end{tikzpicture}
	\end{center}
	\caption{The Young's lattice  of $\mathcal{C}_{\square_{3\times 2}}$.}	
\end{figure}

\noindent Now, for a fixed vector  ${\bf z}=(1,2)$, consider the set 
\[ N^{\bf z}_{{\bf m}_{\bf a}} = \#\{ {\bf a} \in k\Delta_{2}\cap \Z_{\geq 0}^{2} : {\bf a}\cdot {\bf z} = {\bf m}_{\bf a}  \}.\]
Geometrically, it means the simplex $k\Delta_{2}$ is being sliced by the line segments $a_1 +2a_2 = {\bf m}_{\bf a}$ and the lattice points of  $k\Delta_{2}$ on each of these lines are being counted. The reader is referred to [1] and [2].  Notice that $0\leq {\bf m}_{\bf a}\leq 2k$ so that the graded polynomial $ {\rm P}^{\bf z}_{k}(q)$ is given by 
\begin{equation}
 {\rm P}^{\bf z}_{k}(q)= \sum_{{\bf m}_{\bf a}=0 }^{2k} N^{\bf z}_{{\bf m}_{\bf a}} q^{{\bf m}_{\bf a}}= \left[ k+2 \choose 2\right]_q
\end{equation}
with the product expansions given given by 
\begin{equation}
{\rm G}(q,t) = \prod^{2}_{i=0}\frac{1}{(1-q^{i}t)}.
\end{equation}
The graded polynomial ${\rm P}^{\bf z}_{k}(q)$ is a refinement of the Ehrhart polynomial $\mathcal{L}_{\Delta_{2}}(k)$ of the $k$-fold dilation of the standard 2-simplex. Therefore, the Hasse diagram $\mathcal{G}_{k\Delta_2}$ of the poset $(k\Delta_{2}\cap \Z_{\geq 0}^{2}, <_{\rm lex})$ is graded with rank function $\rho({\bf a})={\bf m}_{\bf a}$ where $(0, k)$ and $(0,0)$ are respectively the maximum and minimum elements. In fact, its rank polynomial is  the equation $(5.6)$. This is deeply connected with the Poincar\'e polynomial of the cohomology ${\rm H}^{\ast}({\rm Gr}(2,n), \Z)$ of the Grassmannian of 2-planes in the 4-complex space, that is, the Hilbert series for a graded ring arising from the Borel presentation of the cohomology ring. It is well known that to every fitted partition $\lambda\subseteq\square_{k\times2}$ there is a corresponding Grassmannian permutation $w(\lambda)$ given in the equation $(4.1)$. These permutations, denoted by $\mathcal{R}^{_{\tau}w}_{n}$, are precisely the row readings of the set $\mathcal{C}_{n-2,1,1}$ of recording row-strict tableaux $\tau$ in which the tableaux encode  the positions of descents and ascents of the vertices of the graph $\mathcal{G}_{_{n}w}$ of reduced words of the permutation $_{n}w$.
\begin{proposition}
Let ${\bf a}=(a_1,a_2)$ be a lattice point in $\mathcal{C}$ such that  ${\bf m}_{\bf a}$ and $\lambda=(\lambda_1,\lambda_2)$ are respectively the weight and the partition associated with ${\bf a}$. Then the length of the Grassmannian permutation $w(\lambda)\in\mathcal{R}^{_{\tau}w}_{n}$ corresponding to $\lambda$ is  ${\bf m}_{\bf a}$.
\end{proposition}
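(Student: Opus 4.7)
The plan is to compute $\ell(w(\lambda))$ directly from the defining formula $(4.1)$ and match the answer against the definition of ${\bf m}_{\bf a}$ in $(5.5)$. Unpacking $(4.1)$ for the case at hand, the Grassmannian permutation reads
\[
w(\lambda) = [\,1+\lambda_{2},\ 2+\lambda_{1},\ w_{3},\ w_{4},\ \dots,\ w_{n}\,],
\]
where $w_{3}<w_{4}<\dots<w_{n}$ lists, in increasing order, the elements of $[n]\setminus\{1+\lambda_{2},\,2+\lambda_{1}\}$. Since $\lambda_{1}\ge\lambda_{2}$, we have $1+\lambda_{2}<2+\lambda_{1}$, so positions $1$ and $2$ form no inversion, and $w_{3}<\dots<w_{n}$ contribute no inversions among themselves; thus every inversion involves either position $1$ or position $2$ paired with some $j\ge 3$.

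Next I would count those two families of inversions. The values smaller than $w(\lambda)_{1}=1+\lambda_{2}$ form the set $\{1,2,\dots,\lambda_{2}\}$, and none of them equal $w(\lambda)_{1}$ or $w(\lambda)_{2}=2+\lambda_{1}\ge 2+\lambda_{2}>\lambda_{2}$, so all $\lambda_{2}$ of them appear among $w_{3},\dots,w_{n}$, yielding $\lambda_{2}$ inversions with position $1$. Similarly, the integers smaller than $w(\lambda)_{2}=2+\lambda_{1}$ form the set $\{1,2,\dots,\lambda_{1}+1\}$, which has $\lambda_{1}+1$ elements; exactly one of them, namely $1+\lambda_{2}$, is placed at position $1$, and the remaining $\lambda_{1}$ appear among $w_{3},\dots,w_{n}$, giving $\lambda_{1}$ inversions with position $2$.

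Adding the two contributions yields $\ell(w(\lambda))=\lambda_{1}+\lambda_{2}=|\lambda|$. Finally, substituting the assignments $\lambda_{1}=a_{1}+a_{2}$ and $\lambda_{2}=a_{2}$ given just before Proposition~5.1 produces
\[
|\lambda|=(a_{1}+a_{2})+a_{2}=a_{1}+2a_{2}=\sum_{t=1}^{2}t\,a_{t}={\bf m}_{\bf a},
\]
which is the desired identity. The argument is essentially a bookkeeping exercise; the only point that requires a moment of care is verifying that the sets $\{1,\dots,\lambda_{2}\}$ and $\{1,\dots,\lambda_{1}+1\}$ intersect the two fixed positions $w(\lambda)_{1}, w(\lambda)_{2}$ in the way claimed, which is handled by the inequality $\lambda_{1}\ge\lambda_{2}$ together with the explicit values $1+\lambda_{2}$ and $2+\lambda_{1}$.
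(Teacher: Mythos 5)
Your proof is correct and follows essentially the same route as the paper: the paper packages the inversion count in the Lehmer code $(w_{1}-1,\,w_{2}-2,\,0,\dots,0)=(\lambda_{2},\lambda_{1},0,\dots,0)$ of $w(\lambda)$ and sums its entries, which is exactly the position-by-position inversion count you carry out explicitly. Both arguments then conclude $\ell(w(\lambda))=\lambda_{1}+\lambda_{2}=a_{1}+2a_{2}={\bf m}_{\bf a}$; your version merely spells out the verification (via $\lambda_{1}\ge\lambda_{2}$) that the paper leaves implicit.
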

\begin{proof}
Since the Grasmmannian permutation $w(\lambda)$ is of the form $$w_{i}=i+\lambda_{3-i}, \ 1\leq i\leq 2  \ {\rm and} \ w_{j}< w_{j+1},\ 3\leq j\leq n,$$ so its Lehma code $c(w(\lambda))$ is given by $(w_{1}-1, w_{2}-2, 0,\dots,0)$. The sum of the entries of the code is the length of the permutation since each entry in the $i^{th}$ coordinate by definition denotes the number of inversions associated with the value $w_{i}$ of the permutation. The non-increasing rearrangement of nonzero entries of the code $c(w(\lambda))$ is precisely the partition $\lambda$, hence, $$w_{2} +w_{1}-3 =\ell(w(\lambda)) = \lambda_1 +\lambda_2 = {\bf m}_{\bf a}$$
\end{proof}
\noindent The projection  $\pi$ in the equation $(4.1)$ induces a monomorphism $\pi^{\ast}$ at the level of cohomology.
\begin{equation} 
\pi^{\ast} :{\rm H}^{\ast} ({\rm Gr}(2,n), \Z) \longrightarrow {\rm H}^{\ast}(\mathcal{F}\ell _{n}(\C),\Z) 
\end{equation}  
which takes cycle $\sigma_{\lambda}$ to  the cycle $\sigma_{w(\lambda)}$. The reader is referred to [6] and [8] .The cohomology ring of the Grassmannian $Gr(2,n)$ is generated by the Schubert cycles $\sigma_{\lambda}$. These are Poincar\'e dual of the fundamental classes in the homology of Schubert varieties. The ring is isomorphic to the coinvariant algebra via Borel presentation, that is,
\begin{equation}
{\rm H}^{\ast} ({\rm Gr}(2,n), \Z)\cong \frac{\Z[x_1,\dots, x_n]^{\frak{S}_{2}\times\frak{S}_{n-2}}}{\Z^{+}[x_1,\dots,x_n]^{\frak{S}_{n}}}.
\end{equation}
Therefore, the Hilbert series of the coinvariant algebra  is the Poincar\'e polynomial of the cohomology ring, that is,   the Gaussian polynomial $f(t)$ given by 
\begin{equation}
f(t)=\frac{(1-t^{n})(1-t^{n-1})}{(1-t)(1-t^2)}.
\end{equation}
Notice that this is precisely the rank polynomial  given in $(5.6)$. 
\begin{proposition}
Setting $k=n-2$, there is an order preserving between the posets $(k\Delta_{2}\cap \Z_{\geq 0}^{2}, <_{\rm lex})$ and $(\mathcal{R}^{_{\tau}w}_{n}, \leq)$ making the Hasse diagrams $\mathcal{G}_{k\Delta_2}$ and  $\mathcal{H}_{\mathcal{R}^{_{\tau}w}_{n}}$ isomorphic.
\end{proposition}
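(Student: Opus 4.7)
The plan is to exhibit an explicit bijection $\Phi : k\Delta_{2} \cap \Z_{\geq 0}^{2} \to \mathcal{R}^{_{\tau}w}_{n}$ and verify that it preserves both the rank function and the cover relations; since both posets are graded by the same numerical rank (Theorem 4.6 on one side, the description following (5.6) on the other), this will identify the two Hasse diagrams.

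First, I would set $\Phi({\bf a}) = w(\lambda_{\bf a})$, where ${\bf a} = (a_1, a_2)$, $\lambda_{\bf a} = (a_1+a_2,\, a_2)$ is the partition attached to ${\bf a}$ in Proposition 5.1, and $w(\lambda)$ is the Grassmannian permutation defined via equation (4.1). By Proposition 5.1, $\lambda_{\bf a} \subseteq \square_{k\times 2}$, so $w(\lambda_{\bf a}) \in \mathcal{R}^{_{\tau}w}_{n}$. Well-definedness and bijectivity are then immediate, the inverse being $\lambda = (\lambda_1,\lambda_2) \mapsto (\lambda_1-\lambda_2,\, \lambda_2)$.

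Second, I would check that $\Phi$ preserves the grading. By Proposition 5.2, $\ell(w(\lambda_{\bf a})) = {\bf m}_{\bf a} = a_1 + 2a_2$. The right-hand side is the rank of ${\bf a}$ in the poset $(k\Delta_2 \cap \Z_{\geq 0}^{2}, <_{\rm lex})$, while the left-hand side is the rank of $w(\lambda_{\bf a})$ in $\mathcal{H}_{\mathcal{R}^{_{\tau}w}_{n}}$ by Theorem 4.6. For cover-preservation I would argue that the Bruhat covers in $\mathcal{R}^{_{\tau}w}_{n}$ are exactly the pairs $w(\lambda) \prec w(\mu)$ where $\mu$ is obtained from $\lambda$ by appending a single box, i.e., the edges of the Young's lattice $\mathcal{Y}_{\mathcal{C}_{\square_{k\times 2}}}$ depicted in Figure 8. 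Pulling back through $\Phi^{-1}$, appending a box to the first row corresponds to the move $(a_1, a_2) \mapsto (a_1+1, a_2)$, and appending a box to the second row corresponds to $(a_1, a_2) \mapsto (a_1-1, a_2+1)$. In both cases the first coordinate changes by $\pm 1$ and the weight ${\bf m}_{\bf a}$ changes by $\pm 1$; these are exactly the edges of $\mathcal{G}_{k\Delta_2}$ displayed in Figure 7.

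The main obstacle is to reconcile the author's definition of $<_{\rm lex}$, which compares only first coordinates, with the actual edges of $\mathcal{G}_{k\Delta_2}$. The lex relation by itself would over-connect pairs of lattice points whose first coordinates differ by $1$; the genuine cover relation is further refined by the grading ${\bf m}_{\bf a}$, so that an edge appears only when the weights also differ by exactly $1$. Once this compatibility is spelled out, both Hasse diagrams are identified with the Young's lattice $\mathcal{Y}_{\mathcal{C}_{\square_{k\times 2}}}$, and $\Phi$ delivers the desired isomorphism between $\mathcal{G}_{k\Delta_2}$ and $\mathcal{H}_{\mathcal{R}^{_{\tau}w}_{n}}$.
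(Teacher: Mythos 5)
The paper states this proposition without any proof, so there is no argument of the author's to compare against; what you have written is essentially the proof the surrounding text intends, assembled from the paper's own ingredients: Proposition 5.1 (the assignment ${\bf a}\mapsto\lambda_{\bf a}=(a_1+a_2,\,a_2)$ lands in $\square_{k\times 2}$), equation $(4.2)$ (the bijection $\lambda\mapsto w(\lambda)$), Proposition 5.2 (rank preservation $\ell(w(\lambda_{\bf a}))={\bf m}_{\bf a}$), and Theorem 4.6 for the grading on the other side. Your map $\Phi$ and its inverse $(\lambda_1,\lambda_2)\mapsto(\lambda_1-\lambda_2,\lambda_2)$ are correct, and your check that appending a box to the first or second row of $\lambda$ pulls back to the moves $(a_1,a_2)\mapsto(a_1+1,a_2)$ and $(a_1,a_2)\mapsto(a_1-1,a_2+1)$ does match the edge sets of Figures 7 and 8, so the argument goes through.

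Two remarks. First, the cover-preservation step rests on the (standard, but in this paper unproved) fact that covers among the Grassmannian permutations $w(\lambda)$, $\lambda\subseteq\square_{k\times 2}$, are exactly single-box additions; one line via $\ell(w(\lambda))=|\lambda|$ and $w(\lambda)\le w(\mu)$ if and only if $\lambda\subseteq\mu$ would make this self-contained. This is worth spelling out because the paper's stated order on $\mathcal{R}^{_{\tau}w}_{n}$ compares lengths only, which is not literally a partial order; you implicitly (and correctly) replace it by the Bruhat, i.e.\ box-containment, order, which is what Figure 6 and Lemma 4.5 require. Second, your point about $<_{\rm lex}$ is a genuine correction rather than a mere reconciliation: taken literally, the covers of the paper's lexicographic order join every pair of lattice points whose first coordinates differ by one (for $k=3$ that gives $4\cdot 3+3\cdot 2+2\cdot 1=20$ edges, against the $12$ edges of Figure 7), so the edge set of $\mathcal{G}_{k\Delta_2}$ must indeed be cut down by requiring the weights ${\bf m}_{\bf a}$ to differ by exactly one, precisely as you state. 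With that emendation made explicit, your proof is complete and is the natural one for this statement.
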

\begin{corollary}
The following isomorphisms hold
$$\mathcal{G}_{_{n}w}\cong \mathcal{H}_{\mathcal{C}_{n-2, 1,1}}\cong \mathcal{H}_{\mathcal{R}^{_{\tau}w}_{n}}\cong \mathcal{Y}_{\mathcal{C}_{\square_{k\times 2}}}\cong\mathcal{G}_{k\Delta_2} $$
\end{corollary}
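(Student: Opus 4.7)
The plan is to prove Corollary 5.4 by assembling the four intermediate isomorphisms as a single chain of order-preserving bijections, then verify that at each stage the covering relation on one side matches the edge structure on the other. Most of the raw material is already in place: Theorem 3.1 and the bijection of Section 3 handle the first link, Lemma 4.5 handles the second, the Grassmannian-to-partition dictionary in Remark 4.3 handles the third, and Proposition 5.3 handles the last. What remains is to confirm compatibility of edges with cover relations everywhere.

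For $\mathcal{G}_{_{n}w}\cong \mathcal{H}_{\mathcal{C}_{n-2,1,1}}$, I would take the bijection $\mathbf{a}\mapsto \tau(\mathbf{a})$ from Theorem 3.1, where the first row of $\tau(\mathbf{a})$ records the descent positions of $\mathbf{a}\in R(_{n}w)$ and the two-box column records the ascent positions. An edge of $\mathcal{G}_{_{n}w}$ is a braid or commutation move, each of which swaps the positions of exactly one entry in the descent/ascent record. Reading this through Definition 4.1, the resulting tableau is either equal to $\tau(\mathbf{a})$ or differs by incrementing/decrementing entries in a way corresponding to a cover in $(\mathcal{C}_{n-2,1,1},\leq)$. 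So the map is a graph isomorphism between $\mathcal{G}_{_{n}w}$ and the Hasse diagram $\mathcal{H}_{\mathcal{C}_{n-2,1,1}}$.

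Next, Lemma 4.5 already gives an order-preserving bijection $\tau\mapsto{_\tau}w$ from $(\mathcal{C}_{n-2,1,1},\leq)$ onto $(\mathcal{R}^{_\tau w}_n,\leq)$, and Proposition 4.4 shows that covers match covers (because covering in $\mathcal{C}_{n-2,1,1}$ forces length to jump by exactly one). This yields $\mathcal{H}_{\mathcal{C}_{n-2,1,1}}\cong \mathcal{H}_{\mathcal{R}^{_\tau w}_n}$. For the next step, attach to each Grassmannian permutation $w(\lambda)\in \mathcal{R}^{_\tau w}_n$ the partition $\lambda\subseteq\square_{k\times 2}$ using formula (4.2); this is a bijection by construction. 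By Proposition 5.2, lengths match weights (that is, $\ell(w(\lambda))=|\lambda|$), and covers in Bruhat order on $\mathcal{R}^{_\tau w}_n$ correspond exactly to adding a single box to $\lambda$ inside $\square_{k\times 2}$, which is the covering relation in the Young lattice $\mathcal{Y}_{\mathcal{C}_{\square_{k\times 2}}}$. Hence $\mathcal{H}_{\mathcal{R}^{_\tau w}_n}\cong \mathcal{Y}_{\mathcal{C}_{\square_{k\times 2}}}$.

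Finally, Proposition 5.3 (with $k=n-2$) provides the order isomorphism $(k\Delta_2\cap \Z_{\geq 0}^2,<_{\mathrm{lex}})\cong(\mathcal{R}^{_\tau w}_n,\leq)$, which via the partition correspondence $\mathbf{a}\mapsto\lambda(\mathbf{a})=(a_1+a_2,a_2)$ already used above factors through the Young lattice, giving $\mathcal{Y}_{\mathcal{C}_{\square_{k\times 2}}}\cong\mathcal{G}_{k\Delta_2}$. Composing the four established isomorphisms produces the full chain. The step I expect to require the most care is the very first one: one must check that braid and commutation moves on a reduced word really do translate to single-box cover moves between recording tableaux (and not to longer moves or to incomparability). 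A clean way to do this is to track, for each reduced word, exactly which entry of the ascent column changes under each admissible move, using the forms of the two degree-one vertices identified in Proposition 3.1. Once that is in hand, the remaining links are matter of unwinding the definitions.
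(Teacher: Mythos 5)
Your proposal matches the paper's (implicit) argument: the paper offers no separate proof of this corollary, obtaining the chain exactly as you do by composing the bijection of Theorem 3.5 / the opening remark of Section 4 ($\mathcal{G}_{_{n}w}\cong\mathcal{H}_{\mathcal{C}_{n-2,1,1}}$), Lemma 4.5 with Proposition 4.4, the dictionary $\lambda\leftrightarrow w(\lambda)$ of Remark 4.3 together with Proposition 5.2, and Proposition 5.3 for the lattice-point graph. Your added check that braid and commutation moves correspond to cover relations is sound and, if anything, more careful than the paper, which simply asserts that the bijection induces the graph--Hasse-diagram isomorphism.
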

\begin{remark}
Notice that the number of vertices of $\mathcal{G}_{_{n}w}$ which contain braid is the rank of the Hasse diagrams $\mathcal{H}_{\mathcal{C}_{n-2, 1,1}}$, $\mathcal{H}_{\mathcal{R}^{_{\tau}w}_{n}}$, $\mathcal{Y}_{\mathcal{C}_{\square_{k\times 2}}}$and $\mathcal{G}_{k\Delta_2}$
\end{remark}

\begin{example}
Consider the set $\mathcal{C}:= 3\Delta_{2}\cap \Z_{\geq 0}^{2}$ of lattice points on the $3$-fold dilation of the standard 2-simplex $\Delta_2$. That is,
$$\mathcal{C}=\{ (0,3), (1,2), (0,2), (2,1), (1,1), (3,0), (0,1), (2,0), (1,0), (0,0) \}.$$ The set $\mathcal{C}_{\lambda}$ of the corresponding fitted partitions $\lambda\subseteq\square_{3\times 2}$ is given by
$$\mathcal{C}_{\lambda}= \{ (3,3), (3,2), (2,2), (3,1), (2,1]), (3), (1,1), (2), (1), \emptyset \}$$ , so  the set  $\mathcal{C}_{w(\lambda)}$ of the corresponding Grassmannian permutations is given by
$$\mathcal{C}_{w(\lambda)}=\{45123, 35124, 34125, 25134, 24135,  15234, 23145, 14235, 13245, 12345\}=\mathcal{R}^{_{\tau}w}_{5}$$
\[\mathcal{C}_{3,1,1}=\left\lbrace \begin{array}{l} \vspace{10pt} \vcenter{\hbox{\young(123,5,4),\, \young(124,5,3),\, \young(125,4,3),\, \young(134,5,2),\, \young(135,4,2),\, \young(234,5,1)}} \\ \vcenter{\hbox{\young(145,3,2),\, \young(235,4,1),\, \young(245,3,1),\, \young(345,2,1) }}\end{array}\right\rbrace .\] 
$$\mathcal{G}_{_{n}w}= \{ 432134, 432314, 432341, 423214, 423241, 243214, 423421,  243241, 243421, 234321\}$$
\end{example}
 \noindent {\bf Acknowledgment:} I would like to thank Balazs Szendroi; Ben Young and Mike Zabrocki for  productive discussions during the preparation of the manuscript. 

 \begin{center}
{\bf References}
\end{center}

\begin{enumerate}
\item[{[1]}] P. Adeyemo and B. Szendr\"{o}i. \emph{ Refined Ehrhart Series and Bigraded rings}. Studia Scientiarum Mathemacarum Hungarica. Volume 60, Issue 2-3(2023), p. 97-108.
\item[{[2]}] P. Adeyemo. \emph{Grassmannians in the Lattices Points of Dilations of the Standard Simplex}. International J. Math. Combin. Vol 1. (2023), 1-20.
\item [{[3.]}] S. Assaf and A. Schilling, \emph{A Demazure crystal construction for Schubert polynomial} Volume 1, issue 2 (2018), p. 225-247.
\item[{[4.]}]  A. Bj\"{o}rner and F. Brenti, \emph{Combinatorics of Coxeter Groups}, Graduate Texts in Mathematics. Springer. 2005.
\item[{[5.]}] P. Edelman and C. Greene,\emph{Balanced tableaux,} Adv. in Math. 63(1987) no. 1 42-99.
\item[{[6.]}] W. Fulton, \emph{Young tableaux}, volume 35 of London Mathematical Society Student Texts. Cambridge University Press, Cambridge, 1997.
\item [{[7.]}]T. Lam, \emph{Affine Stanley symmetric functions}, Amer. J. Math. 128 (2006), no. 6, 1553-1586.
\item[{[8.]}] V. Lakshhmibai and J. Brown. \emph{Flag Varieties: An Interplay of Geometry, Combinatorics, and Representation Theory}. Texts and Reading in Mathematics, Vol. 59.
\item[{[9.]}]A. Lascoux and M. P. Sch\"{u}tzenberger, \emph{Structure de Hopf de l'anneau de cohomologie et
de l'anneau de Grothendieck d'une variet$\acute{e}$ de drapeaux, }C.R. Acad. Sci. Paris 295  (1982.), 629-633.
\item[{[10.]}] B.E Sagan, \emph{The Symmetric Group: Representations, Combinatorial Algorithms, and Symmetric Functions}, (2nd Ed.), (Springer, 2013).
\item[{[11]}]  Schilling, A., N.M. Thi?ery, G. White and N. Williams, \emph{Braid moves in commutation classes}
of the symmetric group?, European J. Combin., 62, 15?34, (2017).
\item [{[12.]}]R. Stanley, \emph{On the number of reduced decompositions of elements of Coxeter groups}, European J. Combinatorics 5(1984)
   359--509.  
\item[{[13]}]  Stanley, R.P., Enumerative Combinatorics, Vol. 1 (2nd Ed.), (Cambridge University Press,
Cambridge, 2012).    
\item[{[14]}] Tenner, B., \emph{On the expected number of commutations in reduced words}, Australas. J.
Combin. 62, 147?154 (2015).

\end{enumerate}

\end{document}